\newcommand{\RR}{\mathbb R}
\newcommand{\beqn}{\begin{equation}}
\newcommand{\eeqn}{\end{equation}}
\newcommand{\bean}{\begin{eqnarray}}
\newcommand{\eean}{\end{eqnarray}}
\DeclareMathAlphabet{\mathpzc}{OT1}{pzc}{m}{it}
\newtheorem{theorem}{Theorem}[section]
\newtheorem{corollary}[theorem]{Corollary}
\newtheorem{lemma}[theorem]{Lemma}
\newtheorem{proposition}[theorem]{Proposition}
\newtheorem{remark}[theorem]{Remark}
\numberwithin{equation}{section}
\begin{document}
\title{Sign-Preserving Property for Some Fourth-Order Elliptic Operators in One Dimension and Radial Symmetry} 

\author{Philippe Lauren\c{c}ot}
\address{Institut de Math\'ematiques de Toulouse, CNRS UMR~5219, Universit\'e de Toulouse \\ F--31062 Toulouse Cedex 9, France}
\email{laurenco@math.univ-toulouse.fr}

\author{Christoph Walker}
\address{Leibniz Universit\"at Hannover\\ Institut f\" ur Angewandte Mathematik \\ Welfengarten 1 \\ D--30167 Hannover\\ Germany}
\email{walker@ifam.uni-hannover.de}

\keywords{Maximum principle, fourth-order equations, MEMS}
\subjclass{35B50, 35J40, 35J91, 35J62}

\date{\today}

\begin{abstract}
For a class of one-dimensional linear elliptic fourth-order equations with homogeneous Dirichlet boundary conditions it is shown that a non-positive and non-vanishing right-hand side gives rise to a negative solution. A similar result is obtained for the same class of equations  for radially symmetric solutions in a ball or in an annulus. Several applications are given, including applications to nonlinear equations and eigenvalue problems.
\end{abstract}

\maketitle

%
%
\pagestyle{myheadings}
\markboth{\sc{Ph. Lauren\c{c}ot and Ch. Walker}}{\sc{Sign-Preserving Property for 1D-Fourth-Order Equations}}

\section{Introduction}

A central tool for the analysis of linear and nonlinear second-order elliptic and parabolic equations is the celebrated maximum principle which, roughly speaking, guarantees non-negativity (or even positivity) of solutions provided the boundary and/or initial data are non-negative and the equation satisfies suitable properties. It has far reaching applications, not only to well-posedness issues (for instance, being at the basis of the theory of viscosity solutions, see e.g. \cite{CIL92}), but also to the qualitative behavior of solutions, see \cite{GT01, PS07a, PS07b, QS07} for instance and the references therein. Owing to its powerfulness, a natural question is whether a similar tool is available for higher order linear elliptic operators and, in particular, for the biharmonic operator $\Delta^2$ in a bounded domain $\Omega$. This question has a long and rich history, and we refer to \cite{GGS10, Gr09} for a detailed account and references. Roughly speaking, the validity of the maximum principle for the biharmonic operator $\Delta^2$ in a bounded domain $\Omega$ turns out to depend heavily on the boundary conditions on $\partial\Omega$ and on the domain $\Omega$ itself. For instance, given a non-positive smooth function $f:~\bar{\Omega}\to (-\infty,0]$ with $f\not\equiv 0$, a straightforward application of the maximum principle for the Laplace operator with homogeneous Dirichlet boundary conditions in $\Omega$ reveals that the unique classical solution $u$ to the biharmonic equation with Navier or pinned boundary conditions
\begin{equation}
\Delta^2 u = f \;\text{ in }\; \Omega\ , \qquad u=\Delta u = 0 \;\text{ on }\; \partial\Omega\ , \label{i1}
\end{equation}
is negative in $\Omega$ in the sense that $u(x)<0$ for $x\in\Omega$, a property which will be referred to as the \textsl{strong sign-preserving property} throughout the paper. A similar result, however, fails to be true in general when the Navier boundary conditions are replaced by Dirichlet or clamped boundary conditions, that is, when $u$ solves
\begin{equation}
\Delta^2 u = f \;\text{ in }\; \Omega\ , \qquad u=\partial_n u = 0 \;\text{ on }\; \partial\Omega\ , \label{i2}
\end{equation}
where $\partial_n u$ denotes the normal trace of the gradient of $u$ on the boundary. Nevertheless, it was observed by Boggio  \cite{Bo05} that, when $\Omega$ is the unit ball $\mathbb{B}_1$ of $\mathbb{R}^d$, $d\ge 1$, the Green function associated with \eqref{i2} can be computed explicitly and is positive in $\mathbb{B}_1$. An obvious consequence of this positivity property is that solutions to \eqref{i2} with a non-positive right-hand side $f\not\equiv 0$ are negative. Actually, Boggio's celebrated result provides the impetus for several further studies, in particular the extension of the strong sign-preserving property to other domains $\Omega$ and the analysis of semilinear equations of the form
\begin{equation*}
\Delta^2 u = F(u) \;\text{ in }\; \mathbb{B}_1 , \qquad u=\partial_n u = 0 \;\text{ on }\; \partial \mathbb{B}_1\ ,
\end{equation*}
under suitable assumptions on the nonlinearity $F$, e.g. see \cite{EGG10, GGS10} and the references therein. The strong sign-preserving property of the biharmonic operator with homogeneous Dirichlet boundary conditions in $\mathbb{B}_1$ being at the heart of further investigations, it is therefore tempting to figure out whether it holds true not only for other domains as discussed, for instance, in \cite{GS96, Sa07}, but also for more general fourth-order operators. Besides its theoretical interest, this question also has many applications, e.g. to small deformations of a membrane clamped at its boundary governed by the equation
\begin{equation}
B \Delta^2 u - T \Delta u= f \;\text{ in }\; \Omega , \qquad u=\partial_n u = 0 \;\text{ on }\; \partial\Omega\ , \label{i4}
\end{equation}
where $B \Delta^2 u$ with $B>0$ accounts for bending and $-T \Delta u$ with $T>0$ for stretching. We refer to \cite{EGG10} for a concrete application to microelectromechanical systems (MEMS) and to Theorem~\ref{MEMS} below. In the particular case \eqref{i4}, the strong sign-preserving property has been established in \cite{Ow97} in one space dimension, i.e. when $\Omega=(-1,1)$, the proof relying on the explicit computation of the Green function and its positivity as in \cite{Bo05}. This result is extended in \cite{Gr02}, where the equation
\begin{equation}
u'''' + a u''' + \lambda u''= f \;\text{ in }\; (-1,1)\ , \qquad u(\pm 1) = u'(\pm 1) = 0 \ , \label{i5}
\end{equation}
is shown to enjoy the strong sign-preserving property for $(a,\lambda)\in \RR\times (-\infty,0]$ and $(a,\lambda)\in \{0\}\times (0,\pi^2)$. Interestingly, in contrast to \cite{Bo05, Ow97}, the proof in \cite{Gr02} does not rely on the explicit computation of the Green function associated with \eqref{i5} but on maximum principle arguments applied to second-order elliptic equations after writing \eqref{i5} as an equivalent system of two second-order elliptic equations
\begin{equation*}
u'' =  \gamma\ , \quad \gamma'' + a \gamma' + \lambda \gamma = f \;\text{ in }\; (-1,1)\ , \qquad u(\pm 1) = u'(\pm 1) = 0 \ .
\end{equation*}
Since no boundary condition is given for $\gamma$, the maximum principle cannot be applied directly to the equation for $\gamma$ and a preliminary analysis is required. The possibility of writing \eqref{i5} as a system of two second-order elliptic equations, one with overdetermined boundary conditions and the other with none as in the proof of \cite{Gr02}, is actually also the starting point of the study of the strong sign-preserving property for one-dimensional linear fourth-order operators with general boundary conditions performed in \cite{Sc65, Sc68}. More precisely, the first step in \cite{Sc65} is to transform the fourth-order linear equation
\begin{equation}
 a_4(x) u'''' + a_3(x) u''' + a_2(x) u'' + a_1(x) u' + a_0(x) u = f(x) \label{i6}
\end{equation}
to the system
\begin{equation}
\mathcal{L}_1 u = \gamma\ , \qquad \mathcal{L}_2 \gamma = w(x) \left( f(x) + q(x) u \right) \label{i7}
\end{equation}
for some functions $w>0$ and $q$, which can be computed explicitly in terms of $(a_i)$, the elliptic operators $\mathcal{L}_1$ and $\mathcal{L}_2$ being of second order. The resulting criteria for the strong sign-preserving property of \eqref{i6} with homogeneous Dirichlet boundary conditions are, however, not straightforward to apply.

\medskip

In this paper, we revisit Schr\"oder's approach and show that, if the equation \eqref{i6} has an alternative formulation \eqref{i7} with $q\equiv 0$ and two second-order elliptic operators $\mathcal{L}_1$ and $\mathcal{L}_2$ satisfying the maximum principle, then \eqref{i6} with homogeneous Dirichlet boundary conditions enjoys the strong sign-preserving property. 
More precisely, we shall prove the following result:

\begin{theorem}\label{prbp1}
Consider two second-order elliptic operators 
\begin{equation*}
\mathcal{L}_i w := a_i(x)\ w'' + b_i(x)\ w' + c_i(x)\ w\,, \qquad x\in I:= (-1,1)\ , \qquad i=1,2\ ,
\end{equation*}
where $a_i$, $b_i$, and $c_i$, $i=1,2$, are bounded functions in $[-1,1]$ satisfying
\begin{equation}
\min\{a_1(x), a_2(x)\} \ge \eta>0 \;\;\text{ and }\;\; \max\{c_1(x), c_2(x)\} \le 0\,, \qquad x\in [-1,1]\ , \label{bp1}
\end{equation}
for some $\eta>0$. Assume that there is a pair of functions $(u,\gamma)$ satisfying
$$
u \in C^4(I)\cap C^2([-1,1])\ ,  \quad \gamma\in C^2(I)\cap C([-1,1])\ ,
$$ 
and 
\begin{eqnarray}
\mathcal{L}_1 u & = & \gamma \;\;\text{ in }\;\; I\,, \label{bp2} \\
\mathcal{L}_2 \gamma & \le & 0 \;\;\text{ in }\;\; I\,,\label{bp3} \\
u(\pm 1) & = & u'(\pm 1) = 0\,. \label{bp7}
\end{eqnarray}
Then 
\begin{equation}
\text{either }\quad u\equiv 0\quad \text{ or }\quad u < 0 \;\;\text{ in }\;\; I\,.
\label{bp8}
\end{equation}
\end{theorem}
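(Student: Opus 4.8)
The plan is to argue in two stages, handling first the auxiliary function $\gamma$ and then $u$ itself, relying throughout on the classical maximum principle for second-order elliptic operators with non-positive zeroth-order coefficient. The key structural observation is that $u$ satisfies \emph{overdetermined} boundary conditions ($u=u'=0$ at both endpoints), whereas $\gamma$ satisfies \emph{none}, so the maximum principle cannot be applied directly to the $\gamma$-equation; instead, the sign of $\gamma$ must be extracted indirectly from properties of $u$.

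\medskip

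\textbf{Step 1: $u\le 0$ in $I$.} First I would show $u\le 0$ on $[-1,1]$. Suppose not; then, since $u$ vanishes at $\pm1$, it attains a positive maximum at some interior point $x_0\in I$, where $u''(x_0)\le 0$ and $u'(x_0)=0$, hence $\mathcal{L}_1 u(x_0)=a_1(x_0)u''(x_0)+c_1(x_0)u(x_0)\le 0$ by \eqref{bp1}, i.e. $\gamma(x_0)\le 0$. More is needed: I want to conclude that the set $\{u>0\}$ is empty. Let $\omega$ be a connected component of the open set $\{x\in I:\ u(x)>0\}$. On $\partial\omega$ one has $u=0$, and on $\omega$ the function $\gamma=\mathcal{L}_1 u$ satisfies $\mathcal{L}_2\gamma\le 0$. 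If $\gamma\ge 0$ on $\partial\omega$, then since $\mathcal{L}_2\gamma\le 0$ with $c_2\le 0$ the maximum principle would force $\gamma\ge 0$ throughout $\omega$ — but actually the cleaner route is: on $\omega$, $u$ solves $\mathcal{L}_1 u=\gamma$ with $u=0$ on $\partial\omega$, and we want a contradiction with $u>0$ in $\omega$. The argument I would run is to first establish that $\gamma\le 0$ on all of $I$, and then deduce $u\le 0$ from the $\mathcal{L}_1$-equation with its (more than enough) boundary data.

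\medskip

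\textbf{Step 2: $\gamma\le 0$ in $I$.} To get $\gamma\le 0$, I would suppose $\gamma$ attains a positive value somewhere and let $\omega=\{x\in I:\gamma(x)>0\}$, a nonempty open set. On $\omega$ we have $\mathcal{L}_2\gamma\le 0$ and $c_2\le 0$, so by the maximum principle $\gamma$ cannot attain an interior maximum over $\bar\omega$ unless it is constant; hence $\sup_\omega\gamma=\max_{\partial\omega}\gamma$. Now $\partial\omega\subset\{\gamma=0\}\cup\{-1,1\}$, except that we have no information on $\gamma(\pm1)$. This is the crux. The device (as in \cite{Gr02, Sc65}) is: integrate the relation $u''=(\gamma-b_1 u'-c_1 u)/a_1$ — or more robustly, use that $u\in C^2([-1,1])$ with $u(\pm1)=u'(\pm1)=0$ implies, via two integrations of $\mathcal{L}_1 u=\gamma$ against suitable positive test functions (the solution of the adjoint homogeneous problem, or simply repeated integration), that $\gamma$ must change sign or be identically zero — more precisely, that $\int_I \gamma(x)\varphi(x)\,dx=0$ for the positive solution $\varphi$ of $\mathcal{L}_1^*\varphi=0$ with $\varphi>0$ on $I$, which exists because $\mathcal{L}_1$ with $c_1\le 0$ has a positive supersolution structure. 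Hence $\gamma$ cannot be $\ge 0$ and $\not\equiv 0$; combined with $\gamma$'s maximum over each positivity component being attained on a part of the boundary where either $\gamma=0$ or (at $\pm1$) $\gamma$ is controlled, one forces $\gamma\le 0$ throughout $I$.

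\medskip

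\textbf{Step 3: strict negativity or triviality.} Once $\gamma\le 0$ in $I$ is known, $u$ solves $\mathcal{L}_1 u=\gamma\le 0$ in $I$ with $u(\pm1)=0$; by the maximum principle with $c_1\le 0$, either $u\equiv 0$ or $u<0$ in $I$. If $u\equiv 0$ we are done; otherwise $u<0$ in $I$, which is \eqref{bp8}. I expect the main obstacle to be Step 2, namely rigorously ruling out the possibility that $\gamma$ is positive near an endpoint while its positivity set does not meet the interior zero set of $\gamma$: the overdetermined conditions $u(\pm1)=u'(\pm1)=0$ must be exploited through an integral identity (testing $\mathcal{L}_1 u=\gamma$ against the positive adjoint solution, so that the left-hand side integrates to zero thanks to all four boundary conditions), and verifying that this adjoint solution is indeed strictly positive on the closed interval — equivalently that $1$ is not an eigenvalue-type obstruction for $\mathcal{L}_1$ — is where the hypothesis $c_1\le 0$ is essential. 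The rest is routine application of the second-order maximum principle on subintervals.
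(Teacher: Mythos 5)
Your plan founders on the intermediate claim of Step~2: that $\gamma\le 0$ throughout $I$. This is false for every nontrivial solution, and your own integral identity already shows it. If $\varphi>0$ on $I$ and $\int_{-1}^{1}\gamma\varphi\,\mathrm{d}x=0$, then $\gamma\le 0$ in $I$ forces $\gamma\equiv 0$, hence $\mathcal{L}_1u=0$ with $u(\pm 1)=0$ and the maximum principle give $u\equiv 0$; so the identity rules out $\gamma\ge 0,\ \gamma\not\equiv 0$ \emph{and equally} $\gamma\le 0,\ \gamma\not\equiv 0$. For $u\not\equiv 0$ the function $\gamma$ genuinely changes sign: at an interior minimum $x_m$ of a negative $u$ one has $u'(x_m)=0$, $u''(x_m)\ge 0$ and $c_1(x_m)u(x_m)\ge 0$, so $\gamma(x_m)\ge 0$; for the model $u(x)=-(1-x^2)^2$ (with $u''''=-24\le 0$) one finds $\gamma=u''=4-12x^2$, positive in the middle and negative near $\pm 1$. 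This is exactly the structure established in Corollary~\ref{c1}: $\gamma>0$ on some $(y_0,y_1)\subset I$, $\gamma\le 0$ outside it, and $\gamma(\pm 1)<0$. Step~3 compounds the problem with a reversed maximum principle: from $\mathcal{L}_1u=\gamma\le 0$ in $I$, $u(\pm 1)=0$ and $c_1\le 0$ one gets $u\ge 0$ (a supersolution attains its minimum on the boundary), not $u<0$; combined with $u'(\pm 1)=0$ it in fact forces $u\equiv 0$, which is precisely how the paper excludes the case $\gamma\le 0$ on all of $I$ when proving \eqref{bp12}. Moreover, the adjoint device is not even available under the stated hypotheses: $a_1,b_1$ are only assumed bounded, so $\mathcal{L}_1^*$ need not be defined, and the existence and strict positivity of $\varphi$ on $[-1,1]$ would require a separate argument.

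The missing idea is a way to handle the region where $\gamma>0$ despite having no boundary information on $\gamma$, and this is the paper's Lemma~\ref{lebp2}: if $\mathcal{L}_1v\le 0$ on a subinterval and both $v$ and $v'$ vanish at one endpoint of that subinterval, then either $v\equiv 0$ or $v<0$ at the other endpoint. With it, the argument runs: the minimum principle for $\mathcal{L}_2$ shows $\min\gamma$ is either $\ge 0$ or attained at $x=\pm 1$; the case $\gamma\ge 0$ is excluded by the strong maximum principle for $\mathcal{L}_1$ plus Hopf's lemma at $x=1$ (using $u'(1)=0$), and the case $\gamma\le 0$ by Lemma~\ref{lebp2}, so $\gamma<0$ at an endpoint and $\gamma>0$ somewhere. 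One then locates $-1<y_0<y_1<1$ with $\gamma\le 0$ on $(-1,y_0]$, $\gamma>0$ on $(y_0,y_1)$, $\gamma(y_1)=0$, and shows $\gamma\le 0$ on $[y_1,1)$ via Hopf's lemma ($\gamma'(y_1)<0$) and the maximum principle for $\mathcal{L}_2$; Lemma~\ref{lebp2} applied on $(-1,y)$ for $y\le y_0$ and on $(y,1)$ for $y\ge y_1$ gives $u<0$ there, and the strong maximum principle for $\mathcal{L}_1$, now exploiting $\gamma>0$ on $(y_0,y_1)$, finishes in between. Your Step~1 observation that $\gamma\le 0$ at an interior positive maximum of $u$ is correct but never used, and Step~1 is abandoned midway; nothing in the proposal bridges the actual difficulty.
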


An alternative way to formulate Theorem~\ref{prbp1} is the following: Given twice continuously differentiable functions $a_i, b_i, c_i$, $i=1,2$ on $[-1,1]$ satisfying \eqref{bp1}, the fourth-order differential operator
\begin{equation}\label{LL}
\mathcal{L} u= A_4(x) u'''' + A_3(x) u'''+ A_2(x) u'' + A_1(x) u' +A_0(x) u  
\end{equation}
with
\begin{align}
A_4& := a_1 a_2   \ ,& A_3&:= (2a_1'+b_1)a_2 +a_1 b_2		\ ,\nonumber\\
A_2&:= 	(a_1''+2b_1'+c_1)a_2 + (a_1'+b_1)b_2+a_1c_2		\ ,& & \label{A2}\\
A_1&:= 	(b_1''+2c_1') a_2 +(b_1'+c_1) b_2+b_1 c_2		\ , &
A_0&:= c_1'' a_2 + c_1'b_2+ c_1 c_2\nonumber
\end{align}
subject to homogeneous Dirichlet boundary conditions $u(\pm 1)=u'(\pm 1)=0$ enjoys the strong sign-preserving property.

\begin{remark}
On the one hand, it is unlikely that an arbitrary fourth-order elliptic operator $\mathcal{L}$ of the form \eqref{LL} has a decomposition \eqref{A2} as above. On the other hand, if there is such a decomposition, the choice of the functions $a_i, b_i, c_i$, $i=1,2$ in \eqref{A2} is clearly not unique and may play an important role. This feature is illustrated in Proposition~\ref{le.gr2} below.
\end{remark}

\begin{remark}
Theorem~\ref{prbp1} is stated in such generality that it applies to nonlinear operators as well, that is, the coefficients $a_i, b_i, c_i$ may depend on $u$ itself. In particular, we shall see in Subsection~\ref{Nonlinear} below that the Euler-Lagrange equation of the one-dimensional Willmore functional \cite{willmorebook} fits into the framework developed herein.
\end{remark}

The proof of Theorem~\ref{prbp1} is given in Section~\ref{sec2}. It is based on repeated applications of the strong maximum principle to $\mathcal{L}_1$ and $\mathcal{L}_2$ and requires a careful analysis of the possible behaviors of $\gamma$ (recall that the behavior of $\gamma$ on the boundary is unknown).
Though the proof is restricted to one space dimension, a similar result can be obtained for radially symmetric functions in higher dimensions:

\begin{theorem}\label{prrbp1}
Consider two second-order elliptic operators 
\begin{equation*}
\mathcal{L}_i w := \sum_{j,k=1}^d a_i^{jk}(x)\ \partial_j \partial_k w + \sum_{j=1}^d b_i^j(x)\ \partial_j w + c_i(x)\ w\,, \qquad x\in \mathbb{B}_1\ , \qquad i=1,2\ ,
\end{equation*}
where $\mathbb{B}_1$ denotes the unit ball of $\mathbb{R}^d$, $d\ge 1$, and $a_i^{jk} = a_i^{kj}$, $b_i^{j}$, and $c_i$, $i=1,2$, $1\le j,k\le d$, are bounded functions in $\mathbb{B}_1$ satisfying
\begin{equation}
\sum_{j,k=1}^d a_i^{jk}(x)\ \xi_j\ \xi_k \ge \eta |\xi|^2 \;\;\text{ and }\;\; c_i(x) \le 0\,, \qquad (x,\xi)\in \mathbb{B}_1\times\mathbb{R}^d\ , \label{rbp1}
\end{equation}
for some $\eta>0$ and $i=1,2$. Let $(u,\gamma)$ be a pair of functions satisfying
$$
u \in C^4(\mathbb{B}_1)\cap C^2(\bar{\mathbb{B}}_1)\ ,  \quad \gamma\in C^2(\mathbb{B}_1)\cap C(\bar{\mathbb{B}}_1)\ ,
$$ 
and 
\begin{eqnarray}
\mathcal{L}_1 u & = & \gamma \;\;\text{ in }\;\; \mathbb{B}_1\,, \label{rbp2} \\
\mathcal{L}_2 \gamma & \le & 0 \;\;\text{ in }\;\; \mathbb{B}_1\,, \label{rbp3} \\
u & = & \partial_n u = 0 \;\;\text{ on }\;\; \partial \mathbb{B}_1\,. \label{rbp7}
\end{eqnarray}
If $u$ and $\gamma$ are both radially symmetric, then 
\begin{equation}
\text{either }\quad u\equiv 0\quad \text{ or }\quad u < 0 \;\;\text{ in }\;\; \mathbb{B}_1\,.
\label{rbp8}
\end{equation}
\end{theorem}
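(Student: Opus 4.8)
The plan is to reduce Theorem~\ref{prrbp1} to the one-dimensional result, Theorem~\ref{prbp1}, by passing to radial variables. Since $u$ and $\gamma$ are assumed radially symmetric, I would write $u(x) = U(|x|)$ and $\gamma(x) = \Gamma(|x|)$ for functions $U, \Gamma$ defined on $[0,1]$, and then compute what the operators $\mathcal{L}_1$ and $\mathcal{L}_2$ become when acting on radial functions. The first key step is therefore an algebraic one: show that for a radially symmetric function $w(x) = W(r)$ with $r = |x|$, one has $\partial_j\partial_k w = W''(r)\, \frac{x_j x_k}{r^2} + W'(r)\,\frac{1}{r}\bigl(\delta_{jk} - \frac{x_j x_k}{r^2}\bigr)$ and $\partial_j w = W'(r)\, \frac{x_j}{r}$, so that
\begin{equation*}
\mathcal{L}_i w(x) = \tilde{a}_i(r)\, W''(r) + \tilde{b}_i(r)\, W'(r) + \tilde{c}_i(r)\, W(r),
\end{equation*}
where $\tilde{a}_i(r) := \sum_{j,k} a_i^{jk}(x)\, x_j x_k / r^2$, $\tilde{b}_i(r) := \frac{1}{r}\sum_{j,k} a_i^{jk}(x)\bigl(\delta_{jk} - x_j x_k/r^2\bigr) + \sum_j b_i^j(x)\, x_j/r$, and $\tilde{c}_i(r) := c_i(x)$. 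Here lies the first subtlety: for these to be well-defined functions of $r$ alone, one needs the radial symmetry to be compatible with the coefficients, or alternatively one observes that since $w$ is radial and $\mathcal{L}_i w$ is being evaluated along a single ray, one may simply restrict attention to $x = (r, 0, \dots, 0)$ and define $\tilde{a}_i, \tilde{b}_i, \tilde{c}_i$ by the above formulas along that ray; the values of $\mathcal{L}_i w$ at other points of the sphere $|x| = r$ are irrelevant to the conclusion we want (negativity of $U$), so no genuine consistency condition is needed. The ellipticity bound \eqref{rbp1} gives $\tilde{a}_i(r) \ge \eta > 0$, and $\tilde{c}_i(r) = c_i \le 0$, matching hypothesis \eqref{bp1}.

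The second step is to extend $U$ and $\Gamma$ to even functions on $[-1,1]$. Because $u \in C^4(\mathbb{B}_1)\cap C^2(\bar{\mathbb{B}}_1)$ is radial and smooth at the origin, the standard regularity theory for radial functions gives that $U$ extends to an even $C^4$ function on $(-1,1)$, $C^2$ up to the boundary, with $U'(0) = U'''(0) = 0$; similarly $\Gamma$ extends to an even $C^2((-1,1))\cap C([-1,1])$ function with $\Gamma'(0) = 0$. Moreover $\tilde{a}_i, \tilde{b}_i, \tilde{c}_i$ extend to bounded functions on $[-1,1]$: the only worry is the term $\frac{1}{r}\sum_{j,k}a_i^{jk}(\delta_{jk} - x_jx_k/r^2)$ in $\tilde b_i$, which behaves like $(d-1)\,\tilde a_i(r)/r$ near $r=0$; this is the usual mildly singular radial drift term. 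To fit the framework of Theorem~\ref{prbp1}, which requires bounded coefficients on all of $[-1,1]$ including the origin, I would not work on $(-1,1)$ directly but rather exploit the evenness: apply Theorem~\ref{prbp1} on a slightly smaller symmetric interval, or better, observe that the singularity of $\tilde b_i$ at $r = 0$ is integrable and that the argument of Theorem~\ref{prbp1} — repeated application of the strong maximum principle to $\mathcal{L}_1$ and $\mathcal{L}_2$ — only ever uses the maximum principle on subintervals $[\delta, 1]$ or $[-1, -\delta]$ bounded away from $0$ once one knows the behavior at $0$ is controlled. The cleanest route is probably to note that on any interval $[\delta, 1]$ with $\delta \in (0,1)$ the coefficients are bounded and the one-dimensional maximum-principle machinery applies verbatim, and to handle a neighborhood of $r = 0$ separately using the evenness of $U$ and $\Gamma$ (so that e.g. an interior minimum of $U$ at $0$ still forces $U \equiv$ const locally via the strong maximum principle for the singular-but-still-elliptic radial operator).

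The third step is bookkeeping: the boundary condition $u = \partial_n u = 0$ on $\partial\mathbb{B}_1$ translates to $U(1) = U'(1) = 0$, hence by evenness $U(-1) = U'(-1) = 0$, which is exactly \eqref{bp7}; equations \eqref{rbp2}–\eqref{rbp3} become $\mathcal{L}_1 U = \Gamma$ and $\mathcal{L}_2 \Gamma \le 0$ on $(-1,1)$ (using evenness to extend from $(0,1)$ to $(-1,1)$, noting both sides are even), which are \eqref{bp2}–\eqref{bp3}. Then Theorem~\ref{prbp1} (or rather the adaptation of its proof to accommodate the integrable singularity at the origin) yields $U \equiv 0$ or $U < 0$ on $(-1,1)$, and translating back, $u \equiv 0$ or $u < 0$ in $\mathbb{B}_1 \setminus \{0\}$; since $u$ is continuous and radial, $u(0) = \lim_{r\to 0} U(r) \le 0$, and if $u(0) = 0$ with $u < 0$ nearby then $0$ is an interior maximum of $u$, and applying the strong maximum principle for $\mathcal{L}_1$ at $0$ (using $\mathcal{L}_1 u = \gamma$ and chasing the sign of $\gamma$ as in the one-dimensional proof) forces $u \equiv 0$ near $0$, a contradiction — so in fact $u < 0$ on all of $\mathbb{B}_1$ in the non-trivial case.

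The main obstacle I anticipate is \emph{not} the reduction to radial coordinates, which is routine, but rather ensuring that the one-dimensional argument of Theorem~\ref{prbp1} genuinely survives the singularity of the radial drift coefficient $\tilde b_i(r) \sim (d-1)/r$ at $r = 0$. Theorem~\ref{prbp1} as stated demands \emph{bounded} coefficients on the closed interval, which fails here; so one cannot cite it as a black box. The honest fix is either (i) to re-run the proof of Theorem~\ref{prbp1} keeping track of the fact that the strong maximum principle for $\mathcal{L}_i$ still holds on $(0,1)$ and that, by evenness, a degenerate situation at $r = 0$ behaves like an interior point rather than a boundary point (the Hopf lemma is not needed at $0$ because $U'(0) = 0$ automatically), or (ii) to observe that a non-trivial radial solution cannot have its "worst" behavior at the origin and thereby confine the whole analysis to $[\delta, 1]$. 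I would present option (i), formulated as "the proof of Theorem~\ref{prbp1} applies with only notational changes, the sole points requiring attention being the behavior at $r=0$, which is handled as follows …", and spell out those points.
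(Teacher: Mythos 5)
Your reduction to one dimension is not the route the paper takes, and as written it has a genuine gap at precisely the point you defer. You correctly observe that for $d\ge 2$ the radial drift $\tilde b_i(r)\sim (d-1)\eta/r$ is unbounded at $r=0$, so Theorem~\ref{prbp1} cannot be invoked as a black box; but your claim that the one-dimensional argument ``only ever uses the maximum principle on subintervals bounded away from $0$'' is false in the situation at hand. By evenness of $\Gamma$, the sign pattern produced by the 1D proof is symmetric: one necessarily ends up with $y_0=-r_0<0<r_0=y_1$, so the interval $(y_0,y_1)$ on which $\gamma>0$ straddles the origin. Consequently the core steps of the 1D proof are run on intervals containing the singular point: the weak minimum principle giving \eqref{bp9} is applied on all of $(-1,1)$; the propagation of positivity of $\gamma$ on $(y_0,y)$ and the exclusion of $y_1=1$ use the strong maximum principle on intervals through $0$; and the final step applies the strong maximum principle to $u$ on $(y_0,y_1)=(-r_0,r_0)$ itself. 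None of these is covered by the classical statements you cite, because the drift is not even locally bounded there. The phrase ``which is handled as follows \dots'' is exactly where the proof has to live, and it is not supplied; moreover the suggested dichotomy (i)/(ii) does not by itself yield the needed strong maximum principle across $r=0$.

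The gap is fixable, and the fix is instructive: every problematic application can be rescued by returning to the ball, where $\mathcal{L}_1,\mathcal{L}_2$ have bounded coefficients and the origin is an ordinary interior point, so that an interior non-positive minimum (or non-negative maximum) of $U$ or $\Gamma$ at $r=0$ is an interior extremum of $u$ or $\gamma$ at $x=0$; combined with the 1D strong maximum principle away from $0$ and a connectedness argument one recovers the strong maximum principle for the singular radial operator on intervals containing the origin. But once this is carried out systematically, you have essentially reconstructed the paper's own proof, which never leaves the ball: it applies the weak minimum principle to $\gamma$ in $\mathbb{B}_1$ (this is \eqref{rbp9}, the ball-side version of \eqref{bp9}), works with spheres $\mathbb{S}_r$ and annuli $\mathbb{B}_1\setminus\bar{\mathbb{B}}_r$, proves an annular analogue of Lemma~\ref{lebp2}, and uses the Hopf lemma on an interior sphere $\mathbb{S}_{r_1}$ to force $r_1=0$ — thereby avoiding the coordinate singularity altogether (note also that the sign structure is simpler than in 1D: $\gamma<0$ on an outer annulus and $\gamma>0$ on the punctured inner ball). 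Tellingly, the reduction-to-1D strategy you propose is exactly what the paper does use for the \emph{annulus} (Corollary~\ref{corrbp1}), where $r$ is bounded away from $0$ and the coefficients stay bounded; for the ball the authors give the direct argument precisely because of the obstruction you identified but did not resolve.
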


The proof of Theorem~\ref{prrbp1} is closely related to that of Theorem~\ref{prbp1}, though it differs at some points. It is performed in Section~\ref{sec3}. The assumption that \eqref{rbp2}-\eqref{rbp7} has radially symmetric solutions clearly imposes certain constraints on the coefficients of the operators $\mathcal{L}_1$ and $\mathcal{L}_2$.\\

Theorem~\ref{prbp1} and Theorem~\ref{prrbp1} have several applications. For instance, Theorem~\ref{prbp1}
generalizes the strong sign-preserving property established in \cite{Ow97} and \cite[Proposition~1]{Gr02} for \eqref{i4} when $\lambda\le 0$, see Subsection~\ref{Sub43}.  Further applications of these theorems, e.g. to quasi-linear equations, are discussed in Section~\ref{sec4}. Let us single out one particular application to a semi-linear equation, which includes as a particular case a mathematical model for MEMS \cite{EGG10}:

\begin{theorem}\label{MEMS}
Let $J$ be an open interval in $\mathbb{R}$ containing $0$ and let $g\in C^2(J)$ be a non-negative and non-increasing function with $g(0)>0$. Let $B>0$ and $T>0$ be two real numbers. Then there exists $\lambda_*\in (0,\infty]$ such that, for any $\lambda\in (0,\lambda_*)$, the boundary value problem
$$
B\Delta^2 u-T\Delta u= -\lambda g(u)\ \text{ in } \mathbb{B}_1\ ,\quad u=\partial_n u=0\ \text{ on } \partial \mathbb{B}_1\ ,
$$
has a unique radially symmetric and non-positive classical maximal solution $u_\lambda \in C^4(\mathbb{B}_1)\cap C^2(\bar{\mathbb{B}}_1)$ such that $u_\lambda(\bar{\mathbb{B}}_1)\subset J$. If $\lambda_*<\infty$, then there is no radially symmetric classical solution for $\lambda>\lambda_*$. In addition, for each $x\in \mathbb{B}_1$, the function $\lambda\mapsto u_\lambda (x)$ is decreasing in $(0,\lambda_*)$. Furthermore, if $a:=\inf J>-\infty$ and $m:=\inf_{(a,0)} g >0$, then~$\lambda_*<\infty$.
\end{theorem}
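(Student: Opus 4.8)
The plan is to reduce everything to the radial strong sign-preserving property of Theorem~\ref{prrbp1} through the factorization $B\Delta^2 u - T\Delta u = \Delta(B\Delta u - Tu)$, combined with a monotone iteration scheme. Write $\mathcal{A}u := B\Delta^2 u - T\Delta u$, and note that, setting $\gamma := B\Delta u - Tu$, the pair of operators $\mathcal{L}_1 u := B\Delta u - Tu$ and $\mathcal{L}_2 \gamma := \Delta \gamma$ satisfies the hypotheses \eqref{rbp1} of Theorem~\ref{prrbp1} (ellipticity constant $\eta = \min\{B,1\}$, zeroth order coefficients $c_1 = -T \le 0$ and $c_2 = 0$). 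It is classical (e.g. via Lax--Milgram on $H^2_0(\mathbb{B}_1)$, since $u \mapsto \int_{\mathbb{B}_1}(B|\Delta u|^2 + T|\nabla u|^2)\,dx$ is coercive there, together with elliptic regularity) that $\mathcal{A}$ subject to $u = \partial_n u = 0$ on $\partial\mathbb{B}_1$ is boundedly invertible from $\{u \in W^{4,p}(\mathbb{B}_1): u=\partial_n u=0\text{ on }\partial\mathbb{B}_1\}$ onto $L^p(\mathbb{B}_1)$ for $p\in(1,\infty)$ (injectivity on $C^4(\mathbb{B}_1)\cap C^2(\bar{\mathbb{B}}_1)$ also follows by applying Theorem~\ref{prrbp1} to $u$ and to $-u$ when $\mathcal{A}u=0$), that $\mathcal{A}^{-1}$ preserves radial symmetry (by uniqueness and rotational invariance) and satisfies the usual Schauder estimates, and — by Theorem~\ref{prrbp1} applied to $\pm\mathcal{A}^{-1}f$ — that $f \le 0$ with $f \not\equiv 0$ implies $\mathcal{A}^{-1}f < 0$ in $\mathbb{B}_1$. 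I fix once and for all the radial function $v$ solving $\mathcal{A}v = -1$ in $\mathbb{B}_1$ with $v = \partial_n v = 0$ on $\partial\mathbb{B}_1$; then $v < 0$ in $\mathbb{B}_1$ and $\|v\|_\infty := \max_{\bar{\mathbb{B}}_1}|v| \in (0,\infty)$.

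For a parameter $\mu > 0$, run the iteration $w_0 := 0$, $w_{k+1} := \mathcal{A}^{-1}(-\mu g(w_k))$, as long as $w_k$ has range in $J$. Since $g \ge 0$ and $g(w_k) \ge g(0) > 0$ whenever $w_k \le 0$, each $w_{k+1}$ is radial with $w_{k+1} < 0$ in $\mathbb{B}_1$; since $g$ is non-increasing, $\mathcal{A}(w_{k+1} - w_k) = -\mu(g(w_k) - g(w_{k-1})) \le 0$, so $(w_k)$ is non-increasing. Two comparisons control it. First, if $\bar u \le 0$ is \emph{any} radial classical solution at level $\mu$ with range in $J$, then $w_k \ge \bar u$ for all $k$ (induction: $\mathcal{A}(w_{k+1} - \bar u) = -\mu(g(w_k) - g(\bar u)) \ge 0$ when $w_k \ge \bar u$, and Theorem~\ref{prrbp1}); the monotone bounded sequence then converges pointwise, and elliptic $L^p$- and Schauder estimates (right-hand sides uniformly bounded, then, after one bootstrap, uniformly Hölder since the ranges stay in a compact subinterval of $J$) upgrade this to convergence in $C^4(\mathbb{B}_1)\cap C^2(\bar{\mathbb{B}}_1)$ to a radial solution $u_\mu$ with $\bar u \le u_\mu \le 0$; as $u_\mu \ge \bar u$ for every such $\bar u$, it is \emph{the} maximal non-positive radial solution with range in $J$. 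Second, choose $c \in J \cap (-\infty,0)$ and put $\lambda_0 := |c|/(g(c)\|v\|_\infty) > 0$; if $w_k \ge c$ on $\bar{\mathbb{B}}_1$, then $g(w_k) \le g(c)$, so $\mathcal{A}(w_{k+1} - \mu g(c)v) \ge 0$ and hence $w_{k+1} \ge \mu g(c)v \ge -\mu g(c)\|v\|_\infty$, which is $\ge c$ once $\mu \le \lambda_0$ (the start $w_1 = \mu g(0)v$ uses $g(0) \le g(c)$). So for $0 < \mu \le \lambda_0$ the iteration stays in $[c,0] \subset J$, converges, and provides a non-positive radial solution. Thus the set $\Lambda := \{\mu>0:\ \text{a non-positive radial classical solution with range in }J\text{ exists}\}$ contains $(0,\lambda_0]$ and is non-empty.

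Set $\lambda_* := \sup\Lambda \in (0,\infty]$. If $0 < \lambda' < \lambda \le \lambda_*$ with $\lambda \in \Lambda$, the maximal solution $u_\lambda$ satisfies $\mathcal{A}(w_{k+1}-u_\lambda) = -\lambda' g(w_k) + \lambda g(u_\lambda) \ge (\lambda - \lambda')g(u_\lambda) \ge 0$ whenever $w_k \ge u_\lambda$, so the iteration at level $\lambda'$ is bounded below by $u_\lambda$, stays in $J$, converges, and yields $u_{\lambda'} \ge u_\lambda$; hence $\Lambda \supseteq (0,\lambda_*)$, and $\lambda \mapsto u_\lambda$ is non-increasing. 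Strict monotonicity on $\mathbb{B}_1$ follows by applying Theorem~\ref{prrbp1} to $u_\lambda - u_{\lambda'}$, since $\mathcal{A}(u_\lambda - u_{\lambda'}) = \lambda' g(u_{\lambda'}) - \lambda g(u_\lambda) \le (\lambda' - \lambda)g(u_\lambda) < 0$ in $\mathbb{B}_1$ and $\not\equiv 0$. Next, any radial classical solution $u$ with range in $J$ satisfies $\mathcal{A}u = -\lambda g(u) \le 0$, $\not\equiv 0$ (else $u \equiv 0$ by uniqueness, forcing $g(0)=0$), hence $u < 0$ by Theorem~\ref{prrbp1}, so $\lambda \in \Lambda$ and $\lambda \le \lambda_*$; therefore there is no radial classical solution for $\lambda > \lambda_*$. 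Finally, if $a := \inf J > -\infty$ and $m := \inf_{(a,0)}g > 0$, then for $\lambda \in (0,\lambda_*)$ we have $u_\lambda < 0$ in $\mathbb{B}_1$, so $g(u_\lambda) \ge m$ there, whence $\mathcal{A}(u_\lambda - \lambda m v) = -\lambda g(u_\lambda) + \lambda m \le 0$ and $u_\lambda \le \lambda m v$; since the range of $u_\lambda$ lies in $J$, $a < \min_{\bar{\mathbb{B}}_1} u_\lambda \le -\lambda m \|v\|_\infty$, i.e. $\lambda < |a|/(m\|v\|_\infty)$ for all $\lambda \in (0,\lambda_*)$, so $\lambda_* \le |a|/(m\|v\|_\infty) < \infty$.

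I expect the principal obstacle to be the analytic bookkeeping rather than any single delicate estimate: setting up the linear solution operator $\mathcal{A}^{-1}$ with the mapping, regularity and symmetry properties needed so that Theorem~\ref{prrbp1} applies at every step — in particular arranging the $C^4(\mathbb{B}_1)\cap C^2(\bar{\mathbb{B}}_1)$ regularity of the iterates and of their limit — and verifying that the monotone iteration genuinely does not leave $J$ for small $\lambda$, which is precisely where the auxiliary barrier $v$ and the threshold $\lambda_0$ come in. Once these are in place, the strong sign-preserving property of Theorem~\ref{prrbp1} performs every comparison.
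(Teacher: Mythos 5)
Your proposal is correct, and its skeleton --- monotone iteration from $w_0=0$, with every comparison performed by the radial sign-preserving property of Theorem~\ref{prrbp1} applied to the splitting $\mathcal{L}_1u=B\Delta u-Tu$, $\mathcal{L}_2\gamma=\Delta\gamma$ --- is the same as the paper's (the paper's Lemma~\ref{LL3} and the induction following it are exactly your two comparison steps). You deviate in two sub-arguments, and both deviations are legitimate. First, for $\lambda_*>0$ the paper invokes the implicit function theorem for $F(\lambda,u)=u+\lambda A^{-1}g(u)$ near $(0,0)$, whereas you keep the iteration itself alive for small $\lambda$ by the barrier $v=\mathcal{A}^{-1}(-1)$ and the invariant interval $[c,0]\subset J$, which is more elementary, self-contained, and even yields an explicit lower bound $\lambda_0=|c|/(g(c)\|v\|_\infty)$ for $\lambda_*$. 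Second, for $\lambda_*<\infty$ the paper tests the equation against the positive radially symmetric eigenfunction $\phi_1$ of Theorem~\ref{asterix} (obtained via Kre\u\i n--Rutman) to get $\lambda_*\le -a\mu_1/m$, whereas you compare $u_\lambda$ with $\lambda m v$ to get $\lambda_*\le |a|/(m\|v\|_\infty)$; your route makes Theorem~\ref{MEMS} independent of the eigenvalue machinery of Subsection~\ref{Sub45}, at the price of a bound expressed through $\|v\|_\infty$ rather than through the eigenvalue $\mu_1$. One small caveat: your parenthetical claim that injectivity of $\mathcal{A}$ on $C^4(\mathbb{B}_1)\cap C^2(\bar{\mathbb{B}}_1)$ follows by applying Theorem~\ref{prrbp1} to $\pm u$ is only valid for \emph{radially symmetric} $u$, since that theorem requires radial symmetry of the pair $(u,\gamma)$; for general $u$ you should rely on the energy/Lax--Milgram uniqueness (or on \cite[Theorem~2.20]{GGS10}, as the paper does), which you anyway have at hand, so nothing essential is affected. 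The regularity bookkeeping for the iterates and their limit is sketched at about the same level of detail as in the paper and is standard.
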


The specific application to MEMS is obtained by taking $g(\xi)=1/(1+\xi)^2$ for $\xi\in (-1,1)$. We construct the maximal solution in Subsection~\ref{Sub46} by a monotonicity approach as in \cite[Chapter~11]{EGG10}, where a similar result is proven for $T=0$. An interesting intermediate step in the proof of Theorem~\ref{MEMS} is that there is a unique normalized radially symmetric positive eigenfunction corresponding to a positive eigenvalue of the operator $B\Delta^2 -T\Delta$ subject to homogeneous Dirichlet boundary conditions, see Subsection~\ref{Sub45}.

\section{Sign-Preserving Property in One Space Dimension}\label{sec2}

We prove Theorem~\ref{prbp1} and first state a simple consequence of the strong maximum principle for second-order operators.

\begin{lemma}\label{lebp2}
Consider $-1\le x_1<x_2\le 1$ and a function $v\in C^2((x_1,x_2))\cap C([x_1,x_2])$ such that $\mathcal{L}_1 v \le 0$ in $(x_1,x_2)$.
\begin{itemize}
\item[(a)] If $v(x_1)=v'(x_1)=0$, then either $v\equiv 0$ or $v(x_2)<0$.
\item[(b)] If $v(x_2)=v'(x_2)=0$, then either $v\equiv 0$ or $v(x_1)<0$.
\end{itemize}
\end{lemma}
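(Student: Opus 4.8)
The plan is to derive both statements directly from the strong maximum principle and Hopf's boundary point lemma for second-order elliptic operators, using the sign conditions \eqref{bp1}; statement~(b) then follows by the symmetric argument, or equivalently by reflecting $x\mapsto -x$, under which the structural assumptions \eqref{bp1} are preserved (only $b_1(x)$ becomes $-b_1(-x)$, which is irrelevant). So I would concentrate on~(a).

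For~(a), I would first note that, since $c_1\le 0$ in \eqref{bp1} and $\mathcal{L}_1 v\le 0$, the function $-v$ is a subsolution of an operator with non-positive zeroth-order coefficient, so the standard strong maximum principle and Hopf's lemma apply to it without any further sign restriction. If $v$ is constant, then $v\equiv v(x_1)=0$, which is the first alternative in~(a); hence I may assume $v$ non-constant. Setting $m:=\min_{[x_1,x_2]}v\le v(x_1)=0$, the strong maximum principle applied to the non-constant subsolution $-v$, whose maximum $-m$ is non-negative, shows that $-m$ cannot be attained in the interior; thus $m$ is attained only on $\{x_1,x_2\}$, and $v>m$ in $(x_1,x_2)$.

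It then remains to split into two cases. If $m<0$, then since $v(x_1)=0\ne m$ the minimum is attained at $x_2$, giving $v(x_2)=m<0$, the second alternative. If $m=0$, then $v\ge 0$ on $[x_1,x_2]$ with $v>0$ in $(x_1,x_2)$, so $-v$ attains its non-negative maximum $0$ at the endpoint $x_1$ and is strictly smaller inside; Hopf's boundary point lemma then yields $v'(x_1)>0$, contradicting $v'(x_1)=0$. Hence $m=0$ is excluded when $v\not\equiv 0$, which finishes~(a). The only mildly delicate step is to make sure the hypotheses of these two classical results are genuinely met: the assumption $c_1\le 0$ is precisely what keeps the relevant extremal value non-positive, so that the ``$c\le 0$'' versions of the strong maximum principle and of Hopf's lemma may be invoked, and the one-sided differentiability of $v$ at $x_1$ with $v'(x_1)=0$ is what makes Hopf's lemma applicable at that endpoint. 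I do not expect any genuine obstacle beyond this bookkeeping.
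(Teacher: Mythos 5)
Your argument is correct and uses essentially the same ingredients as the paper: the strong maximum principle (valid since $c_1\le 0$ in \eqref{bp1}) to push any non-positive extremum of $v$ to the endpoints, and Hopf's boundary point lemma at $x_1$ to contradict $v'(x_1)=0$, with part~(b) by symmetry. The only difference is organizational (you case-split on $m=\min_{[x_1,x_2]}v$, while the paper argues by contradiction from a point where $v>0$), so this is not a genuinely different route.
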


\begin{proof}
The proof of the two assertions being similar, we only give that of~(a) and consider the case where $v(x_1)=v'(x_1)=0$. Assume for contradiction that there is $x_0\in (x_1,x_2]$ such that $v(x_0)> 0$. Recalling that $v(x_1)=0$, the function $v$ is obviously not constant in $(x_1,x_0)$. Since $ \mathcal{L}_1v\le 0$ in $(x_1,x_0)$, the strong maximum principle \cite[Theorem~3.5]{GT01} guarantees that $v$ cannot achieve a non-positive minimum in $(x_1,x_0)$ and thus $v(x)>0=v(x_1)$ for $x\in (x_1,x_0)$. We then infer from \cite[Lemma~3.4]{GT01} that $v'(x_1)>0$ and a contradiction. Consequently, 
\begin{equation}
v(x)\le 0 \;\text{  for all }\; x\in (x_1,x_2]\ . \label{bp4}
\end{equation}
 In particular, $v(x_2)\le 0$. Then either $v(x_2)<0$ and the second alternative in Lemma~\ref{lebp2}~(a) is true. Or $v(x_2)=0=v(x_1)$ and the minimum principle entails that $v\ge 0$ in $(x_1,x_2)$. Combining this fact with \eqref{bp4} gives $v\equiv 0$ and completes the proof.
\end{proof}

\medskip

\begin{proof}[Proof of Theorem~\ref{prbp1}]
If $u\equiv 0$, then there is nothing to prove and we thus may assume $u\not\equiv 0$. It first follows from \eqref{bp3} and the minimum principle \cite[Corollary~3.2]{GT01} that 
\begin{equation}
\min_{[-1,1]} \gamma \ge \min{\left\{ \gamma(-1)\wedge 0 , \gamma(1)\wedge 0 \right\}}\,. \label{bp9}
\end{equation}

Assume first for contradiction that $\min_{[-1,1]} \gamma\ge 0$. Then $\gamma\ge 0$ in $I$, which gives, together with \eqref{bp2}, \eqref{bp7}, the property $u\not\equiv 0$, and the strong maximum principle \cite[Theorem~3.5]{GT01} that $u$ cannot achieve a non-negative maximum in $(-1,1)$ and thus $u(x)<0=u(1)$ for all $x\in I$. It then follows from \cite[Lemma~3.4]{GT01} that $u'(1)>0$ which contradicts \eqref{bp7}. Therefore, $\min_{[-1,1]} \gamma< 0$ and it follows from \eqref{bp9} that $\gamma(-1)\wedge \gamma(1)<0$. We may assume without loss of generality that
\begin{equation}
\gamma(-1) = \gamma(-1)\wedge \gamma(1) = \min_{[-1,1]} \gamma < 0. \label{bp10}
\end{equation}
In addition, since $u(-1)=u'(-1)=0$, $u(1)=0$, and $u\not\equiv 0$, Lemma~\ref{lebp2} implies that 
\begin{equation}
\max_{[-1,1]} \gamma > 0\,. \label{bp12}
\end{equation}
Owing to \eqref{bp10} and \eqref{bp12},
\begin{equation}
y_0 := \inf{\left\{ x \in I\ :\ \gamma(x)>0 \right\}} \in I \label{bp14}
\end{equation}
and
\begin{equation}
\gamma\le 0 \;\;\text{ in }\;\; (-1,y_0) \;\;\text{ and }\;\; \gamma(y_0)=0\,. \label{bp15}
\end{equation}
On the one hand, for $y\in (-1,y_0]$, we obtain from \eqref{bp2} and \eqref{bp15} that $\mathcal{L}_1 u \le 0$ in $(-1,y)$, and \eqref{bp7} and Lemma~\ref{lebp2} imply that either $u(y)<0$ or $u\equiv 0$ in $(-1,y)$. But the latter cannot occur as it would imply $\gamma \equiv 0$ in $(-1,y)$ by \eqref{bp2} contradicting \eqref{bp10}. We have thus shown that
\begin{equation}
u<0 \;\;\text{ in }\;\; (-1,y_0]\,. \label{bp17}
\end{equation}
On the other hand, consider $y\in (y_0,1)$ such that $\gamma(y)>0$ (the existence of such points is guaranteed by the definition of $y_0$). According to \eqref{bp3} and the strong maximum principle \cite[Theorem~3.5]{GT01}, the function $\gamma$ cannot achieve a non-positive minimum in $(y_0,y)$ unless it is constant. Since $\gamma(y)>\gamma(y_0)=0$, we conclude that $\gamma>0$ in $(y_0,y)$. We have thus shown that
$$
y_1 := \sup{\left\{ y\in (y_0,1)\ :\ \gamma>0 \;\text{ in }\; (y_0,y) \right\}} > y_0\ .
$$
Assume now for contradiction that $y_1=1$. Then $\gamma> 0$ in $(y_0,1)$ and, since $u$ is not constant by \eqref{bp7} and \eqref{bp17}, it follows from \eqref{bp2}, \eqref{bp7}, \eqref{bp17}, and the strong maximum principle that $u$ cannot achieve a non-negative maximum in $(y_0,1)$, whence $u<0=u(1)$ in $(y_0,1)$. We then infer from \cite[Lemma~3.4]{GT01} that $u'(1)>0$ and a contradiction. Consequently, 
\begin{equation}
y_1\in (y_0,1)\ , \qquad \gamma>0 \;\text{ in }\; (y_0,y_1)\ , \qquad \gamma(y_1)=0\ . \label{bp18}
\end{equation}
In particular, this together with \eqref{bp3} and \cite[Lemma~3.4]{GT01} imply 
\begin{equation}\label{y1}
\gamma'(y_1)<0\ .
\end{equation}
Hence,
 there is a sequence $(q_n)_{n\ge 1}$ such that $q_n\in (y_1,y_1+1/n)\cap I$ and $\gamma(q_n)<0$ for all $n\ge 1$. Fix $n\ge 1$ and assume for contradiction that there is $p_n\in (q_n,1)$ such that $\gamma(p_n)>0$. Owing to the continuity of $\gamma$, there is $z_n\in (q_n,p_n)$ such that $\gamma(z_n)=0$. It then follows from \eqref{bp3} that $\mathcal{L}_2 \gamma \le 0$ in $(y_1,z_n)$ with $\gamma(y_1)=\gamma(z_n)=0>\gamma(q_n)$ which contradicts the strong maximum principle. Consequently, $\gamma\le 0$ in $(q_n,1)$ for all $n\ge 1$ and we have established that 
\begin{equation}
\gamma\le 0 \;\;\text{ in }\;\; (y_1,1)\ . \label{bp19}
\end{equation} 
Now, fix $y\in [y_1,1)$. Owing to \eqref{bp2}, \eqref{bp7}, and \eqref{bp19}, we are in a position to apply Lemma~\ref{lebp2} in $(y,1)$ and conclude that either $u(y)<0$ or $u\equiv 0$ in $(y,1)$. If the latter occurs, it follows from \eqref{bp2} that $\gamma\equiv 0$ in $(y,1)$ as well. Therefore, $\mathcal{L}_2 \gamma \le 0$ in $(y_1,y)$ with $\gamma(y_1)=\gamma(y)=0$ which entails $\gamma\ge 0$ in $(y_1,y)$ by the minimum principle and contradicts \eqref{y1}. Consequently,
\begin{equation}
u<0 \;\;\text{ in }\;\; [y_1,1)\,. \label{bp21}
\end{equation}
Finally, recalling that $\gamma>0$ in $(y_0,y_1)$ by \eqref{bp18}, we infer from \eqref{bp2} and the strong maximum principle that either $u$ is constant in $(y_0,y_1)$ and then $u\equiv u(y_0)=u(y_1)<0$ by \eqref{bp17} and \eqref{bp21} or $u$ cannot achieve a non-negative maximum in $(y_0,y_1)$. In both cases, we conclude $u<0$ in $(y_0,y_1)$, which, together with \eqref{bp17} and \eqref{bp21} gives \eqref{bp8} and completes the proof.
\end{proof}

\begin{corollary}\label{c1}
Under the assumptions of Theorem~\ref{prbp1} and if $u\not\equiv 0$, there are $-1<y_0<y_1<1$ such that $\gamma$ satisfies 
$$
\gamma>0\text{ in } (y_0,y_1)\ ,\quad \gamma\le 0\text{ in } (-1,y_0]\cap [y_1,1) \ ,
$$
and $\gamma(\pm 1)<0$. Furthermore, $u''(\pm 1)<0$.
\end{corollary}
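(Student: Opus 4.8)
The plan is to read off Corollary~\ref{c1} directly from the proof of Theorem~\ref{prbp1}, since essentially all the assertions were already established in the course of that argument, but assembling them cleanly and handling the endpoint signs needs a little care. First I would recall that when $u\not\equiv 0$ the proof of Theorem~\ref{prbp1} produced, after possibly swapping the roles of $x=-1$ and $x=1$, points $y_0$ and $y_1$ with $-1<y_0<y_1<1$ satisfying \eqref{bp15}, \eqref{bp18}, and \eqref{bp19}. These are exactly the stated properties: $\gamma>0$ in $(y_0,y_1)$, $\gamma\le 0$ in $(-1,y_0]$ by \eqref{bp15}, and $\gamma\le 0$ in $[y_1,1)$ by \eqref{bp19} together with $\gamma(y_1)=0$. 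So the only genuinely new content is the strict negativity of $\gamma$ at the two endpoints and of $u''$ at the two endpoints.

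For $\gamma(-1)<0$: in the normalization \eqref{bp10} this is immediate, since there $\gamma(-1)=\min_{[-1,1]}\gamma<0$. For $\gamma(1)<0$ I would argue by contradiction. We already know $\gamma\le 0$ on $[y_1,1)$, hence $\gamma(1)\le 0$ by continuity; suppose $\gamma(1)=0$. Then $\gamma$ attains a non-positive maximum value $0$ at the boundary point $x=1$, while $\mathcal{L}_2\gamma\le 0$ on, say, $(y_1,1)$ and $\gamma(y_1)=0$ as well. One route: apply the strong maximum principle on $(y_1,1)$ to conclude $\gamma\equiv 0$ there (its maximum $0$ being attained at the interior-accessible... here actually the cleanest is Hopf's lemma, \cite[Lemma~3.4]{GT01}): if $\gamma$ were not identically $0$ on $(y_1,1)$, then since its maximum over $[y_1,1]$ equals $0$ and is attained at $x=1$, Hopf's lemma would force $\gamma'(1)>0$, contradicting $\gamma\le 0$ on $[y_1,1)$; hence $\gamma\equiv 0$ on $(y_1,1)$, which by \eqref{bp2} gives $\mathcal{L}_1 u=0$ near $x=1$, and then $u(1)=u'(1)=0$ plus Lemma~\ref{lebp2}(b)... actually more directly, $\gamma\equiv 0$ on $(y_1,1)$ contradicts \eqref{y1}, namely $\gamma'(y_1)<0$. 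This gives $\gamma(1)<0$.

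Finally, for $u''(\pm 1)<0$ I would use the ODE \eqref{bp2} at the endpoints: since $u(\pm 1)=u'(\pm 1)=0$ and $\mathcal{L}_1 u=a_1 u''+b_1 u'+c_1 u=\gamma$, evaluating at $x=\pm1$ gives $a_1(\pm1)\,u''(\pm1)=\gamma(\pm1)<0$, and since $a_1(\pm1)\ge\eta>0$ by \eqref{bp1}, we get $u''(\pm1)<0$. (Here one uses that $u\in C^2([-1,1])$ and $\gamma\in C([-1,1])$ so that the limits of $u''$ and $\gamma$ at $\pm1$ exist and agree with the one-sided derivatives/values.) I expect the only delicate point to be the argument that $\gamma(1)<0$ rather than merely $\le 0$: one must be slightly careful about which interval and which maximum-principle/Hopf statement to invoke, since $x=1$ is a boundary point of $I$ but an interior-type endpoint of the subinterval $(y_1,1)$ where the needed inequalities hold, and one must rule out the degenerate case $\gamma\equiv 0$ near $x=1$ using \eqref{y1}.
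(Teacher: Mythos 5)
Most of your proposal coincides with the paper's proof: the sign pattern of $\gamma$ is read off from \eqref{bp15}, \eqref{bp18}, \eqref{bp19}, $\gamma(-1)<0$ comes from the normalization \eqref{bp10} (the reflection remark is fine since the conclusion is symmetric), and $u''(\pm 1)<0$ follows by evaluating \eqref{bp2} at $\pm 1$ using \eqref{bp7} and \eqref{bp1}, exactly as in the paper. The one genuinely new point, $\gamma(1)<0$, is however where your argument has a real flaw. With $\mathcal{L}_2\gamma\le 0$ and $c_2\le 0$, the Hopf boundary point lemma \cite[Lemma~3.4]{GT01} applies at a boundary point where $\gamma$ attains a strict \emph{non-positive minimum} relative to the subinterval (it is the supersolution version, used in the paper to get \eqref{y1} at $y_1$ from the interval $(y_0,y_1)$ where $\gamma>0$). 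Under your contradiction hypothesis, $x=1$ is a point where $\gamma\le 0$ on $(y_1,1)$ attains its \emph{maximum} value $0$, so the lemma gives nothing there; one would need $\mathcal{L}_2\gamma\ge 0$ to obtain $\gamma'(1)>0$. Moreover, even granting $\gamma'(1)>0$, this does not contradict ``$\gamma\le 0$ on $[y_1,1)$ and $\gamma(1)=0$'': the function $\gamma(x)=x-1$ satisfies all three simultaneously. Hence the dichotomy by which you conclude $\gamma\equiv 0$ on $(y_1,1)$ is unsupported as written.

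The step is easily repaired, and the repair is precisely the route you started and then abandoned (and is the paper's argument): if $\gamma(1)=0$, then on $(y_1,1)$ one has $\mathcal{L}_2\gamma\le 0$ with $\gamma(y_1)=\gamma(1)=0$, so the minimum principle (equivalently, the strong maximum principle in the form ``no non-positive interior minimum unless constant'') yields $\gamma\ge 0$ in $(y_1,1)$, which together with \eqref{bp19} gives $\gamma\equiv 0$ there; this contradicts $\gamma'(y_1)<0$ from \eqref{y1}. Your final contradiction with \eqref{y1} is thus the right target, but the intermediate claim must be obtained via the minimum/strong maximum principle on $(y_1,1)$, not via Hopf's lemma at $x=1$.
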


\begin{proof}
It follows from \eqref{bp15}, \eqref{bp18}, and \eqref{bp19} that $\gamma$ satisfies all the above properties except for the property $\gamma(1)<0$. However, we know from \eqref{bp19} that $\gamma(1)\le 0$ and assuming for contradiction that $\gamma(1)= 0$, the strong maximum principle and \eqref{bp3} entail $\gamma\equiv 0$ in $(y_1,1)$ which is not possible according to \eqref{y1}. Finally, it follows from \eqref{bp2} and \eqref{bp7} that $a_1(\pm 1) u''(\pm 1)=\gamma(\pm 1)$, which gives $u''(\pm 1)<0$ thanks to $\gamma(\pm 1)<0$ and \eqref{bp1}.
\end{proof}


\section{Sign-Preserving Property in Radial Symmetry}\label{sec3}

This section is dedicated to the proof of Theorem~\ref{prrbp1}.
For further use, we set $\mathbb{B}_r:= \{ x\in\mathbb{R}^d\ :\ |x|<r\}$, $\bar{\mathbb{B}}_r := \{ x\in\mathbb{R}^d\ :\ |x|\le r\}$, and $\mathbb{S}_r := \{ x\in\mathbb{R}^d\ :\ |x|=r\}$ for $r\in [0,1)$. Let us stress that, throughout this section, we only deal with radially symmetric functions and often use the property that such functions are constant on $\mathbb{S}_r$ for any $r$ without further notice.

We first establish a variant of Lemma~\ref{lebp2}.

\begin{lemma}\label{lerbp2}
Consider $\varrho\in [0,1)$ and a radially symmetric function $v\in C^2(\mathbb{B}_1\setminus \bar{\mathbb{B}}_\varrho)\cap C(\bar{\mathbb{B}}_1\setminus \mathbb{B}_\varrho)$ such that 
\begin{equation}
\mathcal{L}_1 v \le 0 \;\text{ in }\; \mathbb{B}_1\setminus \bar{\mathbb{B}}_\varrho \;\text{ and }\; v=\partial_n v=0 \;\text{ on }\; \partial \mathbb{B}_1\ . \label{rbp10}
\end{equation}
Then either $v\equiv 0$ in $\mathbb{B}_1\setminus \bar{\mathbb{B}}_\varrho$ or $v(x)<0$ for $x\in\partial \mathbb{B}_\varrho$.
\end{lemma}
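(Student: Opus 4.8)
The plan is to mimic the one-dimensional Lemma~\ref{lebp2} but work in the radial coordinate while being careful about the degeneracy of the elliptic operator at the origin. The key point is that for a radially symmetric function $v$, the quantity $\mathcal{L}_1 v$ reduces to an ordinary differential expression in $r=|x|$ on the annular region $\varrho<|x|<1$, and the strong maximum principle and Hopf boundary point lemma (\cite[Theorem~3.5]{GT01} and \cite[Lemma~3.4]{GT01}) apply on the open annulus $\mathbb{B}_1\setminus\bar{\mathbb{B}}_\varrho$ since it is a smooth bounded domain and $c_1\le 0$ there by \eqref{rbp1}.

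First I would argue by contradiction: suppose $v\not\equiv 0$ in $\mathbb{B}_1\setminus\bar{\mathbb{B}}_\varrho$ and that there is some point $x_0$ on the inner sphere $\mathbb{S}_\varrho$ with $v(x_0)\ge 0$; by radial symmetry $v\equiv v(x_0)\ge 0$ on all of $\mathbb{S}_\varrho$. Since $v=\partial_n v=0$ on the outer sphere $\partial\mathbb{B}_1$, I want to show $v<0$ throughout the open annulus, which will then force $\partial_n v<0$ on $\partial\mathbb{B}_1$ via Hopf's lemma and contradict \eqref{rbp10}. To get $v<0$ in the annulus, consider $\max_{\bar{\mathbb{B}}_1\setminus\mathbb{B}_\varrho} v\ge v(x_0)\ge 0$. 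Because $\mathcal{L}_1 v\le 0$ and $c_1\le 0$, the maximum principle \cite[Corollary~3.2]{GT01} says the nonnegative maximum is attained on the boundary $\partial(\mathbb{B}_1\setminus\bar{\mathbb{B}}_\varrho)=\mathbb{S}_\varrho\cup\partial\mathbb{B}_1$; since $v=0$ on $\partial\mathbb{B}_1$, either the max is $0$ and attained on $\partial\mathbb{B}_1$, or it is positive and attained only on $\mathbb{S}_\varrho$. In the first case the strong maximum principle \cite[Theorem~3.5]{GT01} gives $v\equiv 0$, excluded; more carefully, if $v$ attains a nonnegative max at an interior point it is constant, hence $\equiv 0$ on the closed annulus, contradicting $v\not\equiv0$ — so actually $v<0$ in the open annulus unless the nonnegative maximum sits on $\mathbb{S}_\varrho$. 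In that remaining case $v(x_0)=\max v>0$ and $v<v(x_0)$ in the open annulus, and I would still conclude $v<0=v|_{\partial\mathbb{B}_1}$ in the open annulus (the max $0$ on $\partial\mathbb{B}_1$ would again be an interior-type contradiction via Hopf unless $v\equiv0$): indeed if $v(x_1)=0$ for some $x_1$ in the open annulus, then $v$ would attain its nonpositive value $0$ which is not its minimum only if... — here I need the minimum principle applied to $-v$ or a direct Hopf argument at $\partial\mathbb{B}_1$. The cleanest route: whatever the sign of $v$ on $\mathbb{S}_\varrho$, we have $v\le$ its boundary max and $v=0$ on $\partial\mathbb{B}_1$; apply the strong maximum principle to conclude that either $v\equiv 0$ or $v<0$ on $\partial\mathbb{B}_1$ is strict in the interior near $\partial\mathbb{B}_1$, and then Hopf's boundary lemma \cite[Lemma~3.4]{GT01} at a point of $\partial\mathbb{B}_1$ yields $\partial_n v(x)>0$ there, contradicting $\partial_n v=0$ on $\partial\mathbb{B}_1$ from \eqref{rbp10}. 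This shows $v(x)<0$ on $\mathbb{S}_\varrho$, i.e. $v(x)<0$ for $x\in\partial\mathbb{B}_\varrho$.

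Finally I would also address the boundary case $\varrho=0$, where $\mathbb{B}_1\setminus\bar{\mathbb{B}}_\varrho=\mathbb{B}_1\setminus\{0\}$ is a punctured ball; here smoothness of the radial function $v\in C^2(\mathbb{B}_1)$ at the origin (which is part of the standing hypothesis that $u,\gamma$ are $C^2$, $C^4$ on the full ball in Theorem~\ref{prrbp1}, though for the lemma we should simply note the punctured-ball case is either vacuous or handled by continuity) lets us still run the maximum principle on $\mathbb{B}_1$ itself, and the conclusion $v(0)<0$ follows from the strong maximum principle applied on $\mathbb{B}_1$ together with the Hopf lemma at $\partial\mathbb{B}_1$ exactly as above. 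I expect the main obstacle to be the careful bookkeeping of where the nonnegative maximum of $v$ can be located — on the inner sphere $\mathbb{S}_\varrho$ versus on $\partial\mathbb{B}_1$ — and making sure the Hopf boundary point lemma is legitimately applicable at $\partial\mathbb{B}_1$ (interior sphere condition is automatic for the ball, $c_1\le 0$ holds) so that $\partial_n v=0$ really does produce a contradiction; the degeneracy of $\mathcal{L}_1$ at the origin in the case $\varrho=0$ is a secondary technical wrinkle but does not affect the annular argument, which is the one actually used in the sequel.
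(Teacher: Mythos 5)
There is a genuine gap: throughout your argument you apply the maximum principle and Hopf's lemma with the wrong sign. The hypothesis \eqref{rbp10} says $\mathcal{L}_1 v\le 0$, so $v$ is a \emph{supersolution}; with $c_1\le 0$ the principles you may invoke control the \emph{minimum} of $v$: by \cite[Corollary~3.2]{GT01} the minimum is bounded below by the negative part of the boundary values, by \cite[Theorem~3.5]{GT01} a non-constant supersolution cannot attain a non-positive \emph{interior minimum}, and \cite[Lemma~3.4]{GT01} applies at a boundary point where $v$ lies strictly \emph{below} the interior values, yielding a \emph{negative} outward normal derivative. Your proposal instead asserts that ``the nonnegative maximum is attained on the boundary'' (false for a supersolution, whose positive maximum may well be interior), tries to deduce $v<0$ in the annulus from the contradiction hypothesis $v\ge 0$ on $\mathbb{S}_\varrho$ (impossible: with $v\ge 0$ on both boundary components, the weak minimum principle gives $v\ge 0$ in the annulus, the opposite of what you want), and finally claims that $v<0$ inside together with $v=0$ on $\partial\mathbb{B}_1$ forces $\partial_n v>0$ by Hopf. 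That last configuration is precisely the one in the conclusion of Theorem~\ref{prrbp1} ($u<0$ with $u=\partial_n u=0$ on $\partial\mathbb{B}_1$), so it cannot produce a contradiction; Hopf at a boundary maximum requires $\mathcal{L}_1 v\ge 0$, which is not available. As written, the chain of implications does not close.

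The contradiction must come from positivity, not negativity, of $v$ in the annulus: assume there is $r\in[\varrho,1)$ with $v>0$ on $\mathbb{S}_r$ (this is how the paper argues, and your hypothesis ``$v\ge 0$ on $\mathbb{S}_\varrho$, $v\not\equiv 0$'' can be routed the same way). Since $v=0$ on $\mathbb{S}_1$, $v$ is not constant, and the strong minimum principle excludes a non-positive interior minimum, so $v>0$ in the open annulus; then at any $x_0\in\mathbb{S}_1$ one has $v(x_0)=0<v(x)$ inside, and \cite[Lemma~3.4]{GT01} gives $\partial_n v(x_0)<0$, contradicting $\partial_n v=0$ in \eqref{rbp10}. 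This yields $v\le 0$ up to $\mathbb{S}_\varrho$; the dichotomy is then settled by the weak minimum principle: if $v=0$ on $\mathbb{S}_\varrho$, then $v\ge 0$ in the annulus, hence $v\equiv 0$. Note also that in the case $\varrho=0$ you appeal to $C^2$-regularity of $v$ at the origin, which the lemma does not assume (only $C^2$ on the punctured ball and continuity up to its closure); the argument above needs no such regularity, since the origin is simply part of the boundary of the open set on which the maximum principle is applied.
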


\begin{proof}
Assume for contradiction that there is $r\in [\varrho,1)$ such that $v(x)> 0$ for $x\in \mathbb{S}_r$. Recalling that $v=0$ on $\mathbb{S}_1$, the function $v$ is obviously not constant in $\mathbb{B}_1\setminus\bar{\mathbb{B}}_\varrho$. Since $\mathcal{L}_1 v\le 0$ in $(x_1,x_0)$,  the strong maximum principle \cite[Theorem~3.5]{GT01} guarantees that $v$ cannot achieve a non-positive minimum in $\mathbb{B}_1\setminus\bar{\mathbb{B}}_\varrho$ and thus $v(x)>0$ for $x\in \mathbb{B}_1\setminus\bar{\mathbb{B}}_\varrho$. Consequently, if $x_0\in\mathbb{S}_1$, the previous property and \eqref{rbp10} imply that $v(x_0)<v(x)$ for $x\in \mathbb{B}_1\setminus\bar{\mathbb{B}}_\varrho$. We are then in a position to apply \cite[Lemma~3.4]{GT01} and conclude $\partial_n v(x_0)<0$, which contradicts \eqref{rbp10}. Therefore, 
\begin{equation}
v(x)\le 0 \;\text{  for all }\; x\in \mathbb{B}_1\setminus\bar{\mathbb{B}}_\varrho\ . \label{rbp4}
\end{equation}
Now, either $v<0$ on $\mathbb{S}_\varrho$ and the second alternative in Lemma~\ref{lerbp2}~(a) is true. Or $v=0$  on $\mathbb{S}_\varrho$ and the minimum principle entails that $v\ge 0$ in $\mathbb{B}_1\setminus\bar{\mathbb{B}}_\varrho$. Combining this fact with \eqref{rbp4} gives $v\equiv 0$ and completes the proof.
\end{proof}

\medskip

\begin{proof}[Proof of Theorem~\ref{prrbp1}]
We may again assume that $u\not\equiv 0$. It follows from \eqref{rbp3} and the minimum principle \cite[Corollary~3.2]{GT01} that 
\begin{equation}
\min_{\bar{\mathbb{B}}_1} \gamma \ge \min_{\mathbb{S}_1}{\left\{ \gamma\wedge 0 \right\}}\,. \label{rbp9}
\end{equation}
Assume first for contradiction that $\min_{\bar{\mathbb{B}}_1} \gamma\ge 0$. Then $\gamma\ge 0$ in $\mathbb{B}_1$ and, since $u\not\equiv 0$ in $\mathbb{B}_1$, we deduce from \eqref{rbp2}, \eqref{rbp7}, and the strong maximum principle \cite[Theorem~3.5]{GT01} that $u$ cannot achieve a non-negative maximum in $\mathbb{B}_1$ and thus $u<0$ in $\mathbb{B}_1$. Therefore, given $x_0\in \mathbb{S}_1$, the previous property and \eqref{rbp7} guarantee that $u(x)<u(x_0)=0$ for $x\in \mathbb{B}_1$. Using \cite[Lemma~3.4]{GT01}, we conclude that $\partial_n u(x_0)>0$ and a contradiction with \eqref{rbp7}. We have thus proved that 
\begin{equation}\label{zzz}
\min_{\bar{\mathbb{B}}_1} \gamma< 0\ .
\end{equation}
Then $\gamma(x)<0$ for $x\in\mathbb{S}_1$ by \eqref{rbp9} and the radial symmetry of $\gamma$, from which we deduce that
\begin{equation*}
r_0 := \inf{\left\{ r \in [0,1)\ :\ \gamma(x)<0 \;\text{ for }\; x\in\mathbb{S}_r \right\}} <1\ .
\end{equation*}
Assume for contradiction that $r_0=0$. Then $\gamma\le 0$ in $\mathbb{B}_1$ and \eqref{rbp2}, \eqref{rbp7}, and the minimum principle ensure that $u\ge 0$ in $\mathbb{B}_1$ while Lemma~\ref{lerbp2} implies that $u$ is non-positive in $\mathbb{B}_1$. Consequently, $u\equiv 0$ in $\mathbb{B}_1$ and a contradiction. We have thus shown that
\begin{equation}
r_0>0\ , \quad \gamma = 0 \;\text{ on }\; \mathbb{S}_{r_0}\ , \;\text{ and }\; \gamma<0 \;\text{ in }\; \mathbb{B}_1\setminus\bar{\mathbb{B}}_{r_0}\ . \label{rbp11}
\end{equation}

Now, on the one hand, let $r\in [r_0,1)$. By \eqref{rbp2}, \eqref{rbp7}, \eqref{rbp11}, and Lemma~\ref{lerbp2}, we have either $u<0$ on $\mathbb{S}_r$ or $u\equiv 0$ in $\mathbb{B}_1\setminus\bar{\mathbb{B}}_r$. However, the latter and \eqref{rbp2} would imply $\gamma\equiv 0$ in $\mathbb{B}_1\setminus\bar{\mathbb{B}}_r$ contradicting \eqref{rbp11}. Consequently,
\begin{equation}
u<0 \;\text{ in }\; \mathbb{B}_1\setminus \mathbb{B}_{r_0}\ . \label{rbp12}
\end{equation}
On the other hand, take $r\in (0,r_0)$ such that $\gamma>0$ on $\mathbb{S}_r$ (such an $r$ exists according to the definition and positivity of $r_0$). Since $\gamma$ is obviously non-constant in $\mathbb{B}_{r_0}\setminus\bar{\mathbb{B}}_r$, it follows from the strong maximum principle that $\gamma$ cannot achieve a non-positive minimum in $\mathbb{B}_{r_0}\setminus\bar{\mathbb{B}}_r$ and thus $\gamma>0$ in $\mathbb{B}_{r_0}\setminus\bar{\mathbb{B}}_r$. Introducing
$$
r_1 := \inf{\left\{ r \in (0,r_0)\ :\ \gamma>0 \;\text{ in }\; \mathbb{B}_{r_0}\setminus\bar{\mathbb{B}}_r \right\}} >0\ ,
$$
we have just proved that $r_1<r_0$. Assume now for contradiction that $r_1>0$. Then $\gamma=0$ on $\mathbb{S}_{r_1}$, which, together with \eqref{rbp3} and the minimum principle, entails $\gamma\ge 0$ in $\mathbb{B}_{r_1}$. However, since $\mathcal{L}\gamma \le 0$ in $\mathbb{B}_{r_0}\setminus \mathbb{B}_{r_1}$, $\gamma>0$ in $\mathbb{B}_{r_0}\setminus \mathbb{B}_{r_1}$, and $\gamma=0$ on $\mathbb{S}_{r_1}$, it follows from \cite[Lemma~3.4]{GT01} that $\partial_n\gamma<0$ on $\mathbb{S}_{r_1}$. Hence, we deduce $\gamma<0$ on $\mathbb{S}_{r}$ for $r$ close to $r_1$ with $r<r_1$ and a contradiction. Therefore, $r_1=0$ and $\gamma>0$ in $\mathbb{B}_{r_0}\setminus\{0\}$. It then follows from \eqref{rbp2}, \eqref{rbp12}, and the comparison principle that
$$
\max_{\bar{\mathbb{B}}_{r_0}}{u} \le \max_{\mathbb{S}_{r_0}}{\left\{ u \vee 0 \right\}} = 0\ .
$$
Now, either $u$ is constant in $\mathbb{B}_{r_0}$ and we deduce from \eqref{rbp12} that $u<0$ in $\mathbb{B}_{r_0}$. Or $u$ is not constant in $\mathbb{B}_{r_0}$ and we infer from the strong maximum principle that $u$ cannot achieve a non-negative maximum in $\mathbb{B}_{r_0}$, that is, $u<0$ in $\mathbb{B}_{r_0}$. Recalling \eqref{rbp12}, we have proved \eqref{rbp8}.
\end{proof}

As in Corollary~\ref{c1} for the one-dimensional case, we deduce from the proof of Theorem~\ref{prrbp1} additional information on $u$ on the boundary. Actually, for the biharmonic operator with homogeneous boundary conditions the following result is proved in \cite{GS96b} by means of the integral representation and the positivity properties of the Green function.

\begin{corollary}\label{c22}
Under the assumptions of Theorem~\ref{prrbp1} and if $u\not\equiv 0$, one has $U''(1)<0$, where $U(\vert x\vert )=u(x)$ for $x\in \bar{\mathbb{B}}_1$.
\end{corollary}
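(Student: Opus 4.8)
The plan is to read off the sign of $U''(1)$ from the equation \eqref{rbp2} evaluated near $\mathbb{S}_1$, using the fact — already obtained in the proof of Theorem~\ref{prrbp1}, as a consequence of \eqref{rbp9}, of $\min_{\bar{\mathbb{B}}_1}\gamma<0$, and of the radial symmetry of $\gamma$ — that $\gamma<0$ on $\mathbb{S}_1$. This is the radially symmetric analogue of the short computation ending the proof of Corollary~\ref{c1}, except that the coefficients $a_1^{jk}$ are only assumed bounded, so the passage to the boundary needs a small extra argument.

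First I would record that, for the radial function $u(x)=U(|x|)$ with $u\in C^2(\bar{\mathbb{B}}_1)$, one has for $x\ne 0$
\[
\partial_j u(x)=U'(|x|)\frac{x_j}{|x|}\,,\qquad \partial_j\partial_k u(x)=U''(|x|)\frac{x_j x_k}{|x|^2}+U'(|x|)\left(\frac{\delta_{jk}}{|x|}-\frac{x_j x_k}{|x|^3}\right)\,.
\]
Fixing $x_0\in\mathbb{S}_1$ and using $u(x_0)=0$ together with $U'(1)=\partial_n u(x_0)=0$ from \eqref{rbp7}, letting $x\to x_0$ in these identities yields $\nabla u(x_0)=0$ and $\partial_j\partial_k u(x_0)=U''(1)\,(x_0)_j(x_0)_k$. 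Then I would evaluate \eqref{rbp2} at $x=tx_0$ for $t\in(0,1)$ and let $t\uparrow 1$: since $u\in C^2(\bar{\mathbb{B}}_1)$ gives $\partial_j\partial_k u(tx_0)\to\partial_j\partial_k u(x_0)$, $\partial_j u(tx_0)\to0$ and $u(tx_0)\to0$, and since the coefficients are bounded, all contributions to $\mathcal{L}_1 u(tx_0)$ coming from the first-order and zeroth-order terms, and from the difference $\partial_j\partial_k u(tx_0)-\partial_j\partial_k u(x_0)$, tend to $0$; hence
\[
\gamma(tx_0)=\mathcal{L}_1 u(tx_0)=U''(1)\,\Phi(t)+\delta(t)\,,\qquad \Phi(t):=\sum_{j,k=1}^d a_1^{jk}(tx_0)\,(x_0)_j(x_0)_k\,,
\]
with $\delta(t)\to0$ as $t\uparrow1$. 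The ellipticity condition \eqref{rbp1} applied at the point $tx_0\in\mathbb{B}_1$ with $\xi=x_0$ gives $\Phi(t)\ge\eta>0$. Were $U''(1)\ge0$, this would yield $\gamma(tx_0)\ge\delta(t)$ and thus, by continuity of $\gamma$ up to the boundary, $\gamma(x_0)=\lim_{t\uparrow1}\gamma(tx_0)\ge0$, contradicting $\gamma(x_0)<0$. Hence $U''(1)<0$.

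The only delicate point — and the reason the one-line derivation used for Corollary~\ref{c1}, where $a_1$ is continuous up to $x=\pm1$, does not transfer verbatim — is precisely this last limiting step: because $a_1^{jk}$ need not have a limit at $x_0$, one cannot pass to the limit directly in $\mathcal{L}_1 u(tx_0)$, and instead one isolates the leading contribution $U''(1)\Phi(t)$, whose coefficient is bounded below by $\eta$ thanks to uniform ellipticity, and bounds the remainder using only boundedness of the coefficients together with the vanishing of $u$ and $\nabla u$ and the continuity of $D^2u$ at $\mathbb{S}_1$.
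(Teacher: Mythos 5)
Your proof is correct and follows essentially the same route as the paper's: both deduce $\gamma<0$ on $\mathbb{S}_1$ from \eqref{rbp9} and \eqref{zzz} and then read off the sign of $U''(1)$ from \eqref{rbp2} combined with the ellipticity condition \eqref{rbp1}, using $U(1)=U'(1)=0$ and the Hessian formula for radial functions. The only difference is that the paper evaluates the identity $\gamma(x)=\sum_{j,k=1}^d a_1^{jk}(x)\,x_j x_k\,U''(1)$ directly at $x\in\mathbb{S}_1$, whereas you justify this boundary evaluation by letting $t\uparrow 1$ along $tx_0$, a harmless and slightly more careful refinement given that the coefficients are only assumed bounded in $\mathbb{B}_1$.
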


\begin{proof}
Let $x\in \mathbb{S}_1$ be fixed.
On the one hand, we deduce from \eqref{rbp9} and \eqref{zzz} that $\gamma(x)<0$. On the other hand,
since  $U$ and $U'$ both vanish at $r= 1$, we infer from  \eqref{rbp2} that
$$
0>\gamma(x)=\sum_{j,k=1}^d a_1^{jk}(x)\, x_j\, x_k\, U''(1) 
$$
and the claim readily follows from \eqref{rbp1}.
\end{proof}

\section{Applications} \label{sec4}

In this section we collect a few rather immediate consequences of the previous results. We first give in Subsection~\ref{sub41} a straightforward extension of Theorem~\ref{prrbp1} to an annulus. We then show in Subsection~\ref{Nonlinear} that Theorem~\ref{prbp1} can also be applied in a quasi-linear framework. Moreover, we use Theorem~\ref{prbp1} and Theorem~\ref{prrbp1} to derive the sign-preserving property for \eqref{i5} for a certain range of parameter values (Subsection~\ref{Sub43}), to characterize the polar cone of positive functions related to Moreau's decomposition (Subsection~\ref{sub44}), and to establish the existence of positive eigenfunctions (Subsection~\ref{Sub45}).

\subsection{Equations in an Annulus with Radial Symmetry}\label{sub41}

We prove a result similar to Theorem~\ref{prrbp1} in an annulus $\mathbb{B}_1\setminus \bar{\mathbb{B}}_\rho$ with $\rho\in (0,1)$.

\begin{corollary}\label{corrbp1}
Let $\rho\in (0,1)$ and
consider two second-order elliptic operators 
\begin{equation*}
\mathcal{L}_i w := \sum_{j,k=1}^d a_i^{jk}(x)\ \partial_j \partial_k w + \sum_{j=1}^d b_i^j(x)\ \partial_j w + c_i(x)\ w\,, \qquad x\in \mathbb{B}_1\setminus \bar{\mathbb{B}}_\rho\ , \qquad i=1,2\ ,
\end{equation*}
where  the coefficients $a_i^{jk}=a_i^{kj}$, $b_i^{j}$, and $c_i$, $i=1,2$, $1\le j,k\le d$ are bounded functions. Further assume that the functions $A_i$ and $B_i$, defined for $x\in \mathbb{B}_1\setminus \bar{\mathbb{B}}_\rho$ by
\begin{align*}
 A_i(x)  &:= \sum_{j,k=1}^{d} a_i^{jk}(x)\frac{x_j x_k}{\vert x\vert^2}  \ ,\\
 B_i(x)  &:= \frac{1}{\vert x\vert} \left(\sum_{j=1}^d a_i^{jj}(x)- \sum_{j,k=1}^{d} a_i^{jk}(x)\frac{x_j x_k}{\vert x\vert^2}\right)+\sum_{j=1}^d b_i^j(x)\frac{x_j}{\vert x\vert}\ ,
\end{align*}
and $c_i$ are rotationally invariant in the annulus $\mathbb{B}_1\setminus \bar{\mathbb{B}}_\rho$ and satisfy
$$
A_i(x)\ge \eta >0\ \text{ and } \ c_i(x)\le 0\ ,\quad x\in \mathbb{B}_1\setminus \bar{\mathbb{B}}_\rho\ ,\quad i=1,2\ .
$$ 
Let $(u,\gamma)$ be a pair of functions satisfying
$$
u \in C^4(\mathbb{B}_1\setminus \bar{\mathbb{B}}_\rho)\cap C^2(\bar{\mathbb{B}}_1\setminus \mathbb{B}_\rho)\ , \quad \gamma\in C^2(\mathbb{B}_1\setminus \bar{\mathbb{B}}_\rho)\cap C(\bar{\mathbb{B}}_1\setminus \mathbb{B}_\rho)\ ,
$$ 
and 
\begin{eqnarray}
\mathcal{L}_1 u & = & \gamma \;\;\text{ in }\;\; \mathbb{B}_1\setminus \bar{\mathbb{B}}_\rho\,, \label{rbp22} \\
\mathcal{L}_2 \gamma & \le & 0 \;\;\text{ in }\;\; \mathbb{B}_1\setminus \bar{\mathbb{B}}_\rho\,, \label{rbp33} \\
u & = & \partial_n u = 0 \;\;\text{ on }\;\; \partial (\mathbb{B}_1\setminus \bar{\mathbb{B}}_\rho)\,. \label{rbp77}
\end{eqnarray}
If $u$ and $\gamma$ are both radially symmetric, then 
\begin{equation}
\text{either }\quad u\equiv 0 \quad\text{ or }\quad u < 0  \quad\;\;\text{ in }\;\; \mathbb{B}_1\setminus \bar{\mathbb{B}}_\rho\,.
\label{rbp88}
\end{equation}
\end{corollary}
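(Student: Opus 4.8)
The plan is to reduce Corollary~\ref{corrbp1} to the one-dimensional statement of Theorem~\ref{prbp1} by passing to radial coordinates. Writing $u(x)=U(|x|)$ and $\gamma(x)=\Gamma(|x|)$ for $|x|\in(\rho,1)$, I would first record, for a radially symmetric function $w(x)=W(|x|)$ and $r=|x|$, the standard identities $\partial_j w = W'(r)\,x_j/r$ and $\partial_j\partial_k w = W''(r)\,x_jx_k/r^2 + W'(r)(\delta_{jk}/r - x_jx_k/r^3)$. Substituting these into the definition of $\mathcal{L}_i$ and collecting the $W''$, $W'$, and $W$ terms yields
\[
\mathcal{L}_i w(x) = A_i(x)\,W''(|x|) + B_i(x)\,W'(|x|) + c_i(x)\,W(|x|)\,,\qquad x\in\mathbb{B}_1\setminus\bar{\mathbb{B}}_\rho\,,
\]
with $A_i$ and $B_i$ precisely the functions introduced in the statement (indeed $\sum_{j,k}a_i^{jk}x_jx_k/r^2 = A_i(x)$ and the remaining first-order contribution is $\tfrac1r(\sum_j a_i^{jj}-A_i(x)) + \sum_j b_i^j x_j/r = B_i(x)$). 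Since $A_i$, $B_i$, and $c_i$ are assumed rotationally invariant in the annulus, they may be viewed as functions $\hat a_i$, $\hat b_i$, $\hat c_i$ of $r\in(\rho,1)$ alone, so that \eqref{rbp22}--\eqref{rbp33} become the ordinary differential relations $\hat a_1 U'' + \hat b_1 U' + \hat c_1 U = \Gamma$ and $\hat a_2\Gamma'' + \hat b_2\Gamma' + \hat c_2\Gamma\le 0$ on $(\rho,1)$, while \eqref{rbp77} translates into $U(\rho)=U(1)=U'(\rho)=U'(1)=0$, the outward normal being $\pm x/|x|$ on $\mathbb{S}_1$ and $\mathbb{S}_\rho$.

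Next I would rescale the interval $(\rho,1)$ onto $I=(-1,1)$ through the affine change of variable $r = \tfrac12[(1-\rho)s + (1+\rho)]$, $s\in(-1,1)$, and set $\tilde U(s):=U(r(s))$ and $\tilde\Gamma(s):=\Gamma(r(s))$. The chain rule, together with $U'(r)=\tfrac{2}{1-\rho}\tilde U'(s)$ and $U''(r)=\big(\tfrac{2}{1-\rho}\big)^2\tilde U''(s)$, turns the two relations above into $\tilde{\mathcal{L}}_1\tilde U = \tilde\Gamma$ and $\tilde{\mathcal{L}}_2\tilde\Gamma\le 0$ on $I$, where $\tilde{\mathcal{L}}_i w = \tilde a_i w'' + \tilde b_i w' + \tilde c_i w$ with $\tilde a_i(s) = \big(\tfrac{2}{1-\rho}\big)^2\hat a_i(r(s))$, $\tilde b_i(s) = \tfrac{2}{1-\rho}\hat b_i(r(s))$, and $\tilde c_i(s) = \hat c_i(r(s))$; multiplying the inequality $\mathcal{L}_2\gamma\le 0$ by the positive constant $\big(\tfrac{2}{1-\rho}\big)^2$ preserves its sign. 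One then checks that the hypotheses of Theorem~\ref{prbp1} are met: $\tilde a_i\ge\big(\tfrac{2}{1-\rho}\big)^2\eta>0$ and $\tilde c_i = \hat c_i = c_i\le 0$ on $[-1,1]$; the coefficients $\tilde a_i,\tilde b_i,\tilde c_i$ are bounded because the $a_i^{jk},b_i^j,c_i$ are; the regularity $\tilde U\in C^4(I)\cap C^2([-1,1])$ and $\tilde\Gamma\in C^2(I)\cap C([-1,1])$ follows from that assumed on $u$ and $\gamma$; and $\tilde U(\pm1)=\tilde U'(\pm1)=0$.

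Theorem~\ref{prbp1} then gives the dichotomy $\tilde U\equiv 0$ or $\tilde U<0$ in $I$, and undoing the change of variable this is exactly $u\equiv 0$ or $u<0$ in $\mathbb{B}_1\setminus\bar{\mathbb{B}}_\rho$, i.e.\ \eqref{rbp88}.

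I expect the only genuinely delicate point to be the first step, namely confirming that the radial reduction of $\mathcal{L}_i$ produces exactly the operator with coefficients $A_i$, $B_i$, $c_i$ as displayed in the statement, so that rotational invariance of these three functions is precisely what is needed for the reduced relations to be honest ordinary differential relations on $(\rho,1)$; everything afterwards is a routine affine rescaling and an appeal to the already established one-dimensional result. It is worth noting that, in contrast to the proof of Theorem~\ref{prrbp1}, no special care is required near the origin here, since the origin is excluded from the annulus, which is why reducing to Theorem~\ref{prbp1} rather than mimicking the proof of Theorem~\ref{prrbp1} is the natural route.
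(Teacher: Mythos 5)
Your proposal is correct and is essentially the paper's own argument: reduce to an ODE system for $U(r)$ and $\Gamma(r)$ on $(\rho,1)$ with coefficients $A_i$, $B_i$, $c_i$ and boundary conditions $U(\rho)=U(1)=U'(\rho)=U'(1)=0$, then invoke Theorem~\ref{prbp1}. You merely make explicit the affine rescaling of $(\rho,1)$ onto $(-1,1)$ and the verification of boundedness and ellipticity of the transformed coefficients, which the paper leaves implicit.
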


\begin{proof}
Owing to the radial symmetry of $u$ and $\gamma$, we may write $u(x)=U(\vert x\vert)$ and $\gamma(x)=\Gamma(\vert x\vert )$ and deduce from \eqref{rbp22}-\eqref{rbp77} that
\begin{align*}
 A_1(r) U'' + B_1(r) U' + C_1(r) U &= \Gamma \ , &&  r\in (\rho,1)\ ,\\
 A_2(r) \Gamma'' + B_2(r) \Gamma' + C_2(r) \Gamma &= f(r) \ ,&&  r\in (\rho,1)\ ,\\
 U(\rho)=U(1)=U'(\rho)=U'(1)&= 0\ ,  &&
\end{align*}
where $f:= \mathcal{L}_2\gamma$ and $C_i(\vert x\vert ):= c_i(x)$ for $x\in \mathbb{B}_1\setminus \bar{\mathbb{B}}_\rho$.
On $\mathbb{B}_1\setminus \bar{\mathbb{B}}_\rho$, the functions $A_i$, $B_i$, and $C_i$ are bounded for $i=1,2$, and, due to $f\le 0$ with $f\not\equiv 0$, we are in a position to apply Theorem~\ref{prbp1} to conclude that either $U\equiv 0$ or $U<0$ in $(\rho,1)$.
\end{proof}

\begin{remark}
We shall point out that radial symmetry is important to derive the strong sign-preserving property stated in Corollary~\ref{corrbp1}. Indeed, this property is no longer true for arbitrary functions in an annulus $\mathbb{B}_1\setminus\bar{\mathbb{B}}_\rho$ if $\rho>0$ is sufficiently small \cite{NS77}.
\end{remark}

\subsection{An Application to a Nonlinear Equation}\label{Nonlinear}

Let $B>0$, $T\ge 0$, $\alpha>0$, and consider the quasi-linear equation
\begin{equation}\label{willmore}
B\ \left( \frac{u''}{\left( 1 + (u')^2 \right)^\alpha}\right)'' + \alpha B\ \left( \frac{u' (u'')^2}{\left( 1 + (u')^2 \right)^{\alpha+1}}\right)' - T\ \left( \frac{u'}{\sqrt{1+(u')^2}}\right)' = f \;\;\text{ in }\;\; I\,,
\end{equation}
subject to homogeneous Dirichlet boundary conditions $u(\pm 1)= u'(\pm 1)=0$. Taking $B=1$, $T=0$, and $\alpha=5/2$ one obtains the Euler-Lagrange equation for the one-dimensional Willmore functional for a graph $u$, this functional actually dating back to D.~Bernoulli \cite{TR83}. Also, for $B>0$, $T>0$, and $\alpha=5/2$, equation \eqref{willmore} describes deformations of a membrane clamped at its boundary when the assumption of small deformations is dropped (recall that this assumption allows one to replace \eqref{willmore} by its linearization \eqref{i4}). Now, if $u$ is a classical solution to \eqref{willmore} with right-hand side $f\le 0$ and $f\not\equiv 0$, then $u<0$ in $I$.
 Indeed, this follows from Theorem~\ref{prbp1} by defining
$$
\gamma := \frac{u''}{\left( 1 + (u')^2 \right)^{\alpha/2}}\,,
$$
a transformation already used by Euler \cite[p.248]{Euler}, and setting $a_1=\left( 1 + (u')^2 \right)^{-\alpha/2}$, $b_1=c_1=0$ and
$$
a_2 = B\ \left( 1 + (u')^2 \right)^{-\alpha/2}\,, \quad b_2=a_2'\,, \quad c_2=-T\ \left( 1 + (u')^2 \right)^{(\alpha-3)/2}\,.
$$
Equation \eqref{willmore} with $f\equiv 0$ and  possibly inhomogeneous Dirichlet boundary conditions has been studied
in \cite{DG07}, where also a more detailed account of related research can be found.

\subsection{Equations with Anti-Diffusive Lower Order Terms}\label{Sub43}

As mentioned in the introduction, it is straightforward to deduce from Theorem~\ref{prbp1} that the boundary value problem \eqref{i5} is strongly sign-preserving when $\lambda\le 0$ and $a\in\RR$. This follows simply by choosing $(a_1,b_1,c_1)=(1,0,0)$ and $(a_2,b_2,c_2)=(1,a,\lambda)$.  We show in the next result that the range of $\lambda$, for which the strong sign-preserving property is true, can be extended to some positive values of $\lambda$. To this end, an alternative choice of $\mathcal{L}_1$ and $\mathcal{L}_2$ is required.

\begin{proposition}\label{le.gr2}
Consider $\lambda\in (0,(a^2+\pi^2)/4)$ with $a\in\RR$. Then the boundary value problem \eqref{i5}
\begin{equation*}
u'''' + a u''' + \lambda u''= f \;\text{ in }\; I\ , \qquad u(\pm 1) = u'(\pm 1) = 0 \ , \label{i10}
\end{equation*}
enjoys the strong sign-preserving property. 
\end{proposition}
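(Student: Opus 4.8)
The plan is to apply Theorem~\ref{prbp1} with a carefully chosen decomposition of the operator $\mathcal{L}u = u'''' + au''' + \lambda u''$ into two second-order operators $\mathcal{L}_1$ and $\mathcal{L}_2$. Since $\lambda>0$, the naive choice $(a_1,b_1,c_1)=(1,0,0)$, $(a_2,b_2,c_2)=(1,a,\lambda)$ fails because $c_2 = \lambda > 0$ violates the sign condition $\max\{c_1,c_2\}\le 0$ in \eqref{bp1}. So the first step is to introduce a positive weight function to absorb the bad zeroth-order term. Writing $\mathcal{L}u = (u'' + au' + \lambda u)'' + \cdots$ is not quite right either; instead I would look for a factorization of the fourth-order symbol-like expression through a first transformation $\gamma = \mathcal{L}_1 u$ with $\mathcal{L}_1$ still having no zeroth-order term but now a nonconstant leading coefficient. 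The idea, mirroring the Willmore application in Subsection~\ref{Nonlinear}, is to conjugate away the zeroth order term of the "$\gamma$"-equation by an exponential-type weight.

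Concretely, recall that the operator $v \mapsto v'' + \lambda v$ on $I=(-1,1)$ has first Dirichlet eigenvalue $\pi^2/4$, so for $\lambda < \pi^2/4$ there is a positive function $\varphi \in C^\infty([-1,1])$ (e.g. $\varphi(x)=\cos(\sqrt{\lambda}\,x/?)$ suitably adjusted, or more robustly the principal eigenfunction of a slightly larger interval) with $\varphi'' + \lambda\varphi \le 0$ and $\varphi > 0$ on $[-1,1]$; when $a \ne 0$ one uses instead the first eigenvalue $(a^2+\pi^2)/4$ of $v \mapsto v'' + av' + \lambda v$, which is exactly the threshold appearing in the statement, and takes $\varphi>0$ with $\varphi'' + a\varphi' + \lambda\varphi \le 0$ on $[-1,1]$ (such $\varphi$ exists precisely when $\lambda < (a^2+\pi^2)/4$). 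Then I would set $\gamma := u''/\varphi$ — equivalently choose $a_1 = 1/\varphi$, $b_1 = c_1 = 0$ — so that \eqref{bp2} holds, and seek $\mathcal{L}_2$ of the form $\mathcal{L}_2 w = \varphi^{-1}\big( (\varphi^2 w)'' + a(\varphi^2 w)' + \cdots \big)$ or, more simply, determine $(a_2,b_2,c_2)$ from \eqref{A2} by matching $A_4=1$, $A_3=a$, $A_2=\lambda$, $A_1=A_0=0$ against $a_1=1/\varphi$. Solving \eqref{A2} for $(a_2,b_2,c_2)$ given $(a_1,b_1,c_1)=(1/\varphi,0,0)$ is a linear back-substitution: $a_2 = \varphi A_4 = \varphi$, then $b_2$ from the $A_3$-equation, then $c_2$ from the $A_2$-equation, and one must then \emph{check} that the remaining two equations $A_1 = 0$ and $A_0 = 0$ are automatically satisfied (they will be, because the degrees of freedom match: three functions $a_2,b_2,c_2$ are pinned by three of the five equations and the construction of $\varphi$ is what makes the last two consistent) and that $c_2 \le 0$ on $[-1,1]$, which is exactly where the Sturm--Liouville inequality $\varphi'' + a\varphi' + \lambda\varphi \le 0$ enters.

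With the decomposition in hand, the verification is routine: $a_1 = 1/\varphi \ge \eta > 0$ and $a_2 = \varphi \ge \eta > 0$ since $\varphi$ is continuous and strictly positive on the compact interval $[-1,1]$; $c_1 = 0 \le 0$; and $c_2 \le 0$ by the choice of $\varphi$. The coefficients are bounded because $\varphi, \varphi', \varphi''$ are continuous on $[-1,1]$ (take $\varphi$ smooth). Given a classical solution $u$ to \eqref{i5} with $f \le 0$, $f \not\equiv 0$, one has $u \in C^4(I)$, hence $\gamma = u''/\varphi \in C^2(I)\cap C([-1,1])$, and \eqref{bp2}, \eqref{bp7} hold by definition and the boundary conditions, while \eqref{bp3} reads $\mathcal{L}_2\gamma = \varphi^{-1} f \le 0$ in $I$. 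Moreover $u \not\equiv 0$: if $u \equiv 0$ then $f = \mathcal{L}u \equiv 0$, contradicting $f \not\equiv 0$. Theorem~\ref{prbp1} then yields $u < 0$ in $I$, which is the claimed strong sign-preserving property.

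The main obstacle I anticipate is purely algebraic: producing the weight $\varphi$ and verifying that the back-substituted coefficients $(a_2, b_2, c_2)$ satisfy \emph{all} of \eqref{A2} (not just three of the equations) together with $c_2 \le 0$, and confirming that the threshold $(a^2+\pi^2)/4$ is sharp for the existence of an admissible $\varphi$. One convenient way to handle the $a \ne 0$ case cleanly is to first remove the third-order term by the substitution $u(x) = e^{-ax/2} v(x)$, which turns $u'''' + au''' + \lambda u''$ into an operator in $v$ of the form $v'''' + \tilde\lambda v'' + (\text{lower order})$ with $\tilde\lambda = \lambda - a^2/4$; then the relevant Dirichlet eigenvalue threshold becomes $\pi^2/4$ for $\tilde\lambda$, i.e. $\lambda < a^2/4 + \pi^2/4 = (a^2+\pi^2)/4$, matching the statement exactly. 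I would carry out the argument in these transformed variables, where $\varphi$ can be taken to be a genuine trigonometric function and the computation in \eqref{A2} simplifies considerably; I expect this reduction to be the cleanest route and the place where the bulk of the (short) work lies.
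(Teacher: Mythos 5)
Your main argument is correct and is essentially the paper's proof: the paper also sets $\gamma:=u''/p$ for a positive weight, taking $p$ to be an \emph{explicit} positive solution of $p''+ap'+\lambda p=0$ on $[-1,1]$ (namely $e^{-(a+\sqrt{a^2-4\lambda})x/2}$, $(2+x)e^{-ax/2}$, or $\cos(\sqrt{4\lambda-a^2}\,x/2)e^{-ax/2}$ according to the sign of $a^2-4\lambda$, the last being positive on $[-1,1]$ exactly when $\lambda<(a^2+\pi^2)/4$), so that $(a_1,b_1,c_1)=(1/p,0,0)$ and $(a_2,b_2,c_2)=(p,2p'+ap,0)$ and Theorem~\ref{prbp1} applies. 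Your variant with a strict supersolution $\varphi''+a\varphi'+\lambda\varphi\le 0$ (principal eigenfunction of a slightly enlarged interval) gives $c_2=\varphi''+a\varphi'+\lambda\varphi\le 0$ and works just as well. Also, your worry about the equations $A_1=0$ and $A_0=0$ in \eqref{A2} is vacuous: since $b_1=c_1=0$, these two identities hold automatically for \emph{any} $(a_2,b_2,c_2)$ (equivalently, one can bypass \eqref{A2} entirely and just compute the equation for $\gamma=u''/\varphi$ directly, as the paper does); no tuning of $\varphi$ and no sharpness of the threshold are needed.

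One concrete correction: the reduction you propose to actually carry out, $u(x)=e^{-ax/2}v(x)$, does not do what you claim and should be dropped. With $u=e^{kx}v$ one gets $u''''+au'''+\lambda u''=e^{kx}\bigl[v''''+(4k+a)v'''+(6k^2+3ak+\lambda)v''+(4k^3+3ak^2+2\lambda k)v'+(k^4+ak^3+\lambda k^2)v\bigr]$; for $k=-a/2$ the third-order coefficient is $-a$ (not $0$) and the zeroth-order coefficient is $\tfrac{a^2}{4}\bigl(\lambda-\tfrac{a^2}{4}\bigr)>0$ precisely in the interesting regime $\lambda>a^2/4$, while $k=-a/4$ kills the third-order term but produces second-order coefficient $\lambda-3a^2/8$ and again first- and zeroth-order terms of uncontrolled sign. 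So the transformed problem is not of the form $v''''+\tilde\lambda v''$ and the naive decomposition would violate \eqref{bp1}. The exponential conjugation belongs in the $\gamma$-variable (i.e.\ in the weight: the equation is $(\partial_x^2+a\partial_x+\lambda)(u'')=f$, and $\gamma=u''/\varphi$ with $\varphi=e^{-ax/2}\times(\text{trigonometric factor})$ is exactly the standard conjugation of that second-order operator), which is what your main construction -- and the paper -- already does; carry out the argument there, not via a substitution in $u$.
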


\begin{proof}
If $p\in C^2([-1,1])$ is any positive function, then the function $\gamma := u''/p$ is well-defined in $I$ and it readily follows from \eqref{i5} that $\gamma$ solves
$$
p \gamma'' + \left( 2 p' + a p \right) \gamma´ + \left( p'' + a p'+ \lambda p \right) \gamma = f \;\text{ in }\; I\ .
$$
Define now
$$
p(x) := \left\{
\begin{array}{lcl}
e^{-(a+\sqrt{a^2-4\lambda}) x/2} & \text{ if } & 0 < \lambda < a^2/4\ , \\
(2+x) e^{-ax/2} & \text{ if } & \lambda = a^2/4\ , \\
\cos{(\sqrt{4\lambda-a^2}\ x/2)} e^{-ax/2} & \text{ if } & a^2/4 < \lambda\ ,
\end{array}
\right. 
$$ for $x\in I$ and observe that for $a^2/4<\lambda<(a^2+\pi^2)/4$ we have $p(x)\ge \cos{(\sqrt{4\lambda-a^2}/2)} e^{-a/2}>0$ for $x\in [-1,1]$, the positivity of $p$ being obvious in the other two cases. 

Consequently,  $p>0$ in $[-1,1]$ and, since $p'' + a p' + \lambda p=0$ in $I$, we are in a position to apply Theorem~\ref{prbp1} with $(a_1,b_1,c_1) = (1/p,0,0)$ and $(a_2,b_2,c_2) = (p,2p'+ap,0)$ and conclude that $u<0$ in $I$ if $f\le 0$ in $I$ with $f\not\equiv 0$. 
\end{proof}

\begin{remark}\label{R1}
When $a=0$, we partly recover \cite[Corollary~1]{Gr02}, where the strong sign-preserving property was shown for \eqref{i5} for the wider range $\lambda\in (0,\pi^2)$. However, it may be that this result can be fully recovered by an alternative choice of the operators $\mathcal{L}_1$ and $\mathcal{L}_2$ than the one used above. The result for $a\ne 0$ seems to be new.
\end{remark}

\subsection{An Application to Moreau's Decomposition}\label{sub44}

Let $B>0$ and $T\ge 0$ and define the scalar product $\langle \cdot ,\cdot\rangle$ on $H_D^2(I):=\{u\in H^2(I)\,;\, u(\pm 1)=u'(\pm 1)=0\}$ by
$$
\langle u ,v\rangle :=\int_{-1}^1 \big[ B u''(x) v''(x)+ T u'(x) v'(x)\big]\, \mathrm{d} x\ ,\quad u, v\in H_D^2(I)\ .
$$
Let $\mathcal{K}:=\{u\in H_D^2(I)\,;\, u\ge 0\}$ be the positive cone of $H_D^2(I)$.
According to Moreau's decomposition~\cite{Mo62}, we can write any $u\in H_D^2(I)$ in a unique way as a sum $u=v+w$ with  $\langle v,w\rangle =0$, where $v$ belongs to $\mathcal{K}$ and  $w$ belongs to its polar cone $$\mathcal{K}^\circ:=\{w\in H_D^2(I)\,;\, \langle z ,w\rangle \le 0\ \text{ for all } z\in \mathcal{K}\}\ , $$ which can be further characterized:

\begin{proposition}\label{uu}
We have $\mathcal{K}^\circ \subset -\mathcal{K}$.
\end{proposition}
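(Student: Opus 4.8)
Let $w\in\mathcal K^\circ$; the goal is to show $w\in-\mathcal K$, i.e. $w\le 0$ in $I$. The natural approach is to identify $w$ as a weak solution of a fourth-order equation to which Theorem~\ref{prbp1} (or rather its sign-preserving conclusion) applies, after reversing signs. First I would note that $\langle z,w\rangle\le 0$ for all $z\in\mathcal K$ means precisely that the distribution $Bw''''-Tw''=: -\mu$ satisfies $\langle z,w\rangle=-\int_{-1}^1 z\,\mathrm d\mu\le 0$ for all nonnegative $z\in H_D^2(I)$, so $\mu$ is a nonnegative element of the dual (a nonnegative measure, or at least a nonnegative functional on $H_D^2(I)$). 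Equivalently, $w$ solves $Bw''''-Tw''=f$ in the weak sense with $f=-\mu\le 0$. The plan is then to set $u:=w$, introduce $\gamma:=u''$, and check that the pair $(u,\gamma)$ fits the hypotheses of Theorem~\ref{prbp1} with the decomposition $(a_1,b_1,c_1)=(1,0,0)$ and $(a_2,b_2,c_2)=(B,0,-T)$: indeed $\mathcal L_1u=u''=\gamma$ and $\mathcal L_2\gamma=B\gamma''-T\gamma=Bu''''-Tu''=f\le 0$, while \eqref{bp1} holds since $a_1=1$, $a_2=B>0$, $c_1=0$, $c_2=-T\le 0$. The boundary conditions \eqref{bp7} are built into the definition of $H_D^2(I)$. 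Theorem~\ref{prbp1} then yields either $u\equiv 0$ or $u<0$ in $I$, and in both cases $w=u\le 0$, i.e. $w\in-\mathcal K$.

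\textbf{The main obstacle} is regularity: Theorem~\ref{prbp1} is stated for classical solutions $u\in C^4(I)\cap C^2([-1,1])$, whereas an arbitrary $w\in\mathcal K^\circ$ is a priori only in $H^2(I)$ and the measure $\mu$ need not be smooth. I see two ways around this. The cleaner one is to smooth the right-hand side: the variational characterization shows $w$ is the Riesz representative in $(H_D^2(I),\langle\cdot,\cdot\rangle)$ of the nonnegative functional $-\mu$; approximate $-\mu$ from below (or in a suitable weak sense) by a sequence of nonnegative smooth functions $f_n\le 0$, let $w_n\in H_D^2(I)$ solve $\langle w_n,z\rangle=\int f_n z$ for all $z$, observe by elliptic regularity that $w_n\in C^4(I)\cap C^2([-1,1])$, apply the already-proved Theorem~\ref{prbp1} to get $w_n\le 0$, and pass to the limit using $w_n\to w$ in $H_D^2(I)$ (hence, by Sobolev embedding, uniformly on $[-1,1]$), so that $w\le 0$. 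Alternatively, one can argue directly on the weak formulation by reproving the one-dimensional maximum-principle steps for $H^2$ functions, but this essentially duplicates the proof of Theorem~\ref{prbp1} in a weaker functional setting and is less economical.

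\textbf{A slightly different, self-contained route} avoids fourth-order theory altogether. Since $w=v'$-type obstacles are awkward, instead test the relation $\langle w,z\rangle\le 0$ against well-chosen $z\in\mathcal K$: the standard trick for polar cones is to take $z=w^-:=\max\{-w,0\}\wedge(\text{something})$, but $w^-$ need not lie in $H_D^2(I)$ because its second derivative involves a Dirac mass where $w$ changes sign, so this does not work directly and is exactly why the fourth-order machinery is needed. Hence I would commit to the approximation argument of the previous paragraph. To summarize the intended write-up: (i) rewrite membership in $\mathcal K^\circ$ as a fourth-order inequality with nonpositive data; (ii) regularize the data; (iii) invoke Theorem~\ref{prbp1} on the regularized problems; (iv) pass to the limit in $H_D^2(I)\hookrightarrow C([-1,1])$ to conclude $w\le 0$, that is $\mathcal K^\circ\subset-\mathcal K$.
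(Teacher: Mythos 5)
Your overall strategy is viable but it is not the paper's, and as written it has a real gap at its central step. The paper's proof is a short duality argument that needs no regularization at all: given $w\in\mathcal{K}^\circ$ and an arbitrary $f\in\mathcal{K}\cap C_0^\infty(I)$, let $u$ be the classical solution of $Bu''''-Tu''=f$ in $I$ with $u(\pm1)=u'(\pm1)=0$; Theorem~\ref{prbp1} (applied with right-hand side $-f\le 0$, i.e.\ to $-u$) gives $u\ge 0$, so $u\in\mathcal{K}$ and hence $\langle u,w\rangle\le 0$; integrating by parts, using that $u$ is classical and $w\in H_D^2(I)$, this is exactly $\int_{-1}^1 f\,w\,\mathrm{d}x\le 0$, and since $f\ge 0$ was an arbitrary smooth test function, $w\le 0$. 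In other words, the sign-preserving theorem is applied to the \emph{smooth auxiliary problem}, and $w$ only ever appears through the bilinear form, so no regularity of $w$ beyond $H_D^2(I)$ is needed. You instead apply the theorem to (approximations of) $w$ itself, which is a genuinely different, and heavier, route.

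The gap in your route is the approximation/limit step, which you assert rather than prove. From $w\in\mathcal{K}^\circ$ you only know that $Bw''''-Tw''$ is a non-positive distribution on $I$, hence a non-positive Radon measure that is locally finite in $I$ but may have infinite mass near $\pm 1$ (it lies in the dual of $H_D^2(I)$, nothing more). To conclude $w\le 0$ from $w_n\le 0$ you need $w_n\to w$ at least weakly in $H_D^2(I)$, which requires sign-preserving smooth approximations $f_n\le 0$ of this measure converging in $H^{-2}(I)$ (or weak-$*$ with uniform $H^{-2}$ bounds); ``approximate from below or in a suitable weak sense'' does not supply this, and it is precisely where the work lies. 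It can be done --- cut off near $\pm1$ and mollify, then control the boundary layer using the decay $|z(x)|\le C\|z\|_{H^2}(1-|x|)^{3/2}$ for $z\in H_D^2(I)$ together with the observation that the weight $(1-|x|)^{3/2}$ is itself comparable to an element of $H_D^2(I)$ and is therefore integrable against the measure, which yields the needed convergence by dominated convergence --- but none of this is in your write-up, and your stated claim of \emph{strong} $H_D^2$-convergence of $w_n$ to $w$ is exactly what is unproven. (Also, the $f_n$ should be called non-positive, not non-negative.) If you adopt the duality argument above, all of these issues disappear.
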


As a consequence of Proposition~\ref{uu}, any function in $H_D^2(I)$ can be written as the sum of a non-negative and a non-positive function, which are orthogonal with respect to $\langle\cdot,\cdot\rangle$.

\begin{proof}
Let $w\in \mathcal{K}^\circ$ and let $f\in \mathcal{K}\cap C_0^\infty(I)$. By Theorem~\ref{prbp1}, the classical solution $u$ of the equation $$Bu''''-Tu''=f\  \text{  in } I\ ,\quad u(\pm 1)=u'(\pm 1)=0\ ,$$ 
belongs to $\mathcal{K}$, whence $\langle u ,w\rangle\le 0$. Integration by parts gives $$\int_{-1}^1 f(x) w(x)\,\mathrm{d}x \le 0\ ,$$
and since $f\in \mathcal{K}\cap C_0^\infty(I)$ was arbitrary, this yields the statement.
\end{proof}

Similar results can be found in \cite[Lemma~11.1.4]{EGG10} and \cite[Proposition~3.6]{GGS10} when $T=0$.

\subsection{Positive Eigenfunctions}\label{Sub45}

As for second-order elliptic operators we can combine the strong sign-preserving property with the celebrated Kre\u\i n-Rutman theorem to obtain information on the principal eigenvalue of some fourth-order elliptic operators. We first consider the one-dimensional case.

\begin{theorem}\label{wildschwein}
Let $a_i$, $b_i$, and $c_i$, $i=1,2$ be four times continuously differentiable functions on $[-1,1]$ satisfying \eqref{bp1}. Let $\mathcal{L}$ be the fourth-order elliptic operator defined in \eqref{LL} with coefficients given in \eqref{A2}. Then the eigenvalue problem 
$$
\mathcal{L}\phi =\mu \phi\ \text{ in } I\ ,\quad \phi(\pm 1)= \phi'(\pm 1)=0\ ,
$$
has an eigenvalue $\mu_1>0$ with a corresponding eigenfunction $\phi_1>0$ in $I$, and $\mu_1$ is the only eigenvalue with a positive eigenfunction. Moreover, any eigenfunction corresponding to the eigenvalue $\mu_1$ is a scalar multiple of $\phi_1$.
\end{theorem}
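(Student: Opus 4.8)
The plan is to apply the Kre\u\i n--Rutman theorem to a suitable compact, positive operator built from the resolvent of $\mathcal{L}$ with homogeneous Dirichlet boundary conditions. First I would fix a large constant $\Lambda>0$ and consider, for $f$ in a suitable function space (say $C([-1,1])$ or an $L^p$-space), the boundary value problem
\begin{equation*}
\mathcal{L} w + \Lambda w = f \;\text{ in }\; I\ ,\qquad w(\pm 1)=w'(\pm 1)=0\ .
\end{equation*}
Since $\mathcal{L}$ has the decomposition \eqref{A2} coming from the two second-order operators $\mathcal{L}_1,\mathcal{L}_2$, one checks that $\mathcal{L}+\Lambda = \mathcal{L}_1(\mathcal{L}_2+\Lambda/(a_1 a_2))\cdots$ — more precisely, one observes that $\mathcal{L}w+\Lambda w$ can be rewritten (using $\mathcal{L}_1 w=\gamma$, $\mathcal{L}_2\gamma = a_1 a_2(\mathcal{L}w)/\cdots$, as in \eqref{i7} with $q\equiv 0$) as a cascade of two second-order Dirichlet-type problems. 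For $\Lambda$ large the zeroth-order coefficients of the shifted operators become strictly negative, so each second-order problem is uniquely solvable by standard elliptic theory; composing the two solution operators yields a bounded linear solution operator $\mathcal{S}_\Lambda:f\mapsto w$. By elliptic regularity and the compact embedding $C^{2}([-1,1])\hookrightarrow\hookrightarrow C([-1,1])$ (or the analogous Sobolev embedding), $\mathcal{S}_\Lambda$ is compact.

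Next I would establish positivity. If $f\ge 0$, $f\not\equiv 0$, then $-f\le 0$ and $-f\not\equiv 0$, and writing $\mathcal{L}(-w)=-f-\Lambda(-w)$ one wants to conclude $-w\le 0$, i.e. $w\ge 0$, in fact $w>0$ in $I$. The cleanest route is to absorb the shift: the pair $(-w,-\gamma)$ with $\gamma=\mathcal{L}_1(-w)$ satisfies $\mathcal{L}_1(-w)=-\gamma$ and $\mathcal{L}_2(-\gamma) = -a_1 a_2(f+\Lambda w)/\cdots$; choosing the shift so that it is carried entirely by $\mathcal{L}_2$ (i.e. replacing $c_2$ by $c_2-\Lambda/a_1$ with $\Lambda$ large enough that $c_2-\Lambda/a_1\le 0$ is preserved and in fact already negative) keeps hypothesis \eqref{bp1} valid, and then $\mathcal{L}_2(-\gamma)\le 0$ follows once we know $-w\le 0$. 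This is a fixed-point-type argument; alternatively, and more simply, one notes that $\mathcal{S}_\Lambda$ maps the cone of non-negative functions into itself and is \emph{strongly positive} — $f\ge 0$, $f\not\equiv 0$ implies $\mathcal{S}_\Lambda f>0$ in $I$ — by invoking Theorem~\ref{prbp1} applied to $u:=-\mathcal{S}_\Lambda f$ after this rearrangement of the shift into $\mathcal{L}_2$. Strong positivity in the sense required by the Kre\u\i n--Rutman theorem (the image lies in the interior of the cone in an ordered Banach space with nonempty cone interior, e.g. $C([-1,1])$) then follows from $\mathcal{S}_\Lambda f>0$ in $I$ together with the boundary behavior $u''(\pm1)<0$ from Corollary~\ref{c1}, which pushes $u=-\mathcal{S}_\Lambda f$ strictly into the positive cone near the endpoints.

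The Kre\u\i n--Rutman theorem then gives a simple eigenvalue $r(\mathcal{S}_\Lambda)=r>0$ of $\mathcal{S}_\Lambda$ with a positive eigenfunction $\phi_1$, which is the unique eigenvalue admitting a positive eigenfunction, and all eigenfunctions for $r$ are scalar multiples of $\phi_1$. Translating back, $\mathcal{S}_\Lambda\phi_1=r\phi_1$ means $\mathcal{L}\phi_1+\Lambda\phi_1 = r^{-1}\phi_1$, i.e. $\mathcal{L}\phi_1=\mu_1\phi_1$ with $\mu_1:=r^{-1}-\Lambda$; one must still argue $\mu_1>0$, which I would do by testing the equation against $\phi_1$ itself: using the decomposition \eqref{A2} (or integration by parts plus \eqref{bp1}) the quadratic form associated with $\mathcal{L}$ is coercive on $H_D^2(I)$ up to lower-order terms with the right sign, forcing $\mu_1>0$; a cleaner variant uses that $\phi_1>0$ together with $\mathcal{L}\phi_1=\mu_1\phi_1$ and the sign-preserving property itself to exclude $\mu_1\le 0$ (if $\mu_1\le 0$ then $\mathcal{L}\phi_1=\mu_1\phi_1\le 0$ with $\phi_1>0$, and Theorem~\ref{prbp1} applied to $-\phi_1$ would force $-\phi_1<0$ which is consistent, so one instead tests against the positive eigenfunction of the formal adjoint or uses coercivity — this sign bookkeeping is the main subtlety). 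Uniqueness of the eigenvalue with a positive eigenfunction and simplicity transfer verbatim from $\mathcal{S}_\Lambda$ to $\mathcal{L}$ since the spectral correspondence $\mu\leftrightarrow (\mu+\Lambda)^{-1}$ is a bijection preserving eigenfunctions.

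The main obstacle I anticipate is not the Kre\u\i n--Rutman machinery, which is routine, but the bookkeeping needed to phrase $\mathcal{L}+\Lambda$ (or the shifted problem) in the exact two-second-order-operator form required to invoke Theorem~\ref{prbp1}: one must verify that the shift can be placed so that the hypotheses \eqref{bp1} on $a_i,b_i,c_i$ are preserved (in particular $\max\{c_1,c_2\}\le 0$), and that the resulting $A_j$ from \eqref{A2} actually reproduce $\mathcal{L}+\Lambda\,\mathrm{Id}$ — equivalently, that adding $\Lambda$ to the original $\mathcal{L}$ corresponds to replacing $c_2$ by $c_2-\Lambda/a_1$ while leaving $a_1,b_1,c_1,a_2,b_2$ untouched. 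A secondary obstacle is the strict positivity needed for Kre\u\i n--Rutman: Theorem~\ref{prbp1} gives $u<0$ in the \emph{open} interval but not at the endpoints, so one genuinely needs Corollary~\ref{c1} (the strict sign of $u''(\pm1)$) to conclude that $-u$ lies in the interior of the positive cone of $C([-1,1])$ with respect to a suitable reference function such as $\mathrm{dist}(\cdot,\partial I)^2$.
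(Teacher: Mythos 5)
Your overall strategy (compact resolvent plus Kre\u\i n--Rutman, with strong positivity coming from Theorem~\ref{prbp1} and Corollary~\ref{c1}) is the right one, but the way you set it up contains a genuine gap: the shift $\Lambda$ cannot be absorbed into the decomposition as you claim. Since $\mathcal{L}=\mathcal{L}_2\circ\mathcal{L}_1$, replacing $c_2$ by $c_2-\Lambda/a_1$ changes $\mathcal{L}$ into $\mathcal{L}-(\Lambda/a_1)\mathcal{L}_1$, i.e.\ it subtracts $\Lambda u''+(\Lambda b_1/a_1)u'+(\Lambda c_1/a_1)u$, not adds $\Lambda u$; one sees this directly from \eqref{A2}, where such a replacement perturbs $A_2$ and $A_1$, not only $A_0$. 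A genuine zeroth-order shift $\mathcal{L}+\Lambda\,\mathrm{Id}$ written in cascade form produces the system $\mathcal{L}_1u=\gamma$, $\mathcal{L}_2\gamma=f-\Lambda u$, which is \eqref{i7} with $q\not\equiv0$ and with a coupling of the wrong (non-cooperative) sign, so Theorem~\ref{prbp1} does not apply and the positivity of your $\mathcal{S}_\Lambda$ is not established. This also creates the second unresolved point you yourself flag: with a shift you must prove $\mu_1=r^{-1}-\Lambda>0$ afterwards, and neither of your two suggestions closes this ($\mathcal{L}$ is in general non-self-adjoint with uncontrolled lower-order coefficients $A_1,A_0$, so there is no obvious coercive quadratic form, and the ``apply Theorem~\ref{prbp1} to $-\phi_1$'' argument is, as you note, inconclusive). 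Finally, your primary choice of ambient space $C([-1,1])$ cannot work for strong positivity: the interior of its positive cone consists of functions with strictly positive minimum, while every resolvent image vanishes at $\pm1$.

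All three difficulties disappear if you drop the shift entirely, which is what the paper does. Theorem~\ref{prbp1}, applied to $u$ and to $-u$, shows that $\mathcal{L}u=0$ with $u(\pm1)=u'(\pm1)=0$ forces $u\equiv0$; then \cite[Theorem~2.19]{GGS10} gives unique solvability of $\mathcal{L}u=f$ with these boundary conditions and a bounded solution operator $K=\mathcal{L}^{-1}$ from $C^{1+\alpha}$ into $C^{4+\alpha}$, hence a compact endomorphism of the space $C_D^{2+\alpha}([-1,1])$ of $C^{2+\alpha}$ functions with $w(\pm1)=w'(\pm1)=0$. In that space the positive cone has nonempty interior, and for $f\ge0$, $f\not\equiv0$, Theorem~\ref{prbp1} together with Corollary~\ref{c1} gives $Kf>0$ in $I$ and $(Kf)''(\pm1)>0$, which places $Kf$ in the interior of the cone; Kre\u\i n--Rutman then yields $r(K)>0$ with all the stated uniqueness and simplicity properties, and $\mu_1=1/r(K)>0$ comes for free, with no sign bookkeeping. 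To repair your write-up you should either reproduce this shift-free argument or, if you insist on a shift, prove positivity of the shifted resolvent by a different mechanism (e.g.\ a perturbation or iteration argument for small $\Lambda$), since the algebraic absorption into $c_2$ is false.
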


\begin{proof}
It readily follows from Theorem~\ref{prbp1} that the equation $\mathcal{L} u=0$ in $I$ subject to the boundary condition  $u(\pm 1)= u'(\pm 1)=0$ has only the trivial solution. Therefore, since the coefficients of $\mathcal{L}$ are $C^2$-smooth functions, we may apply \cite[Theorem~2.19]{GGS10} to obtain that, for any fixed $\alpha\in (0,1)$ and any $f\in C^{1+\alpha}([-1,1])$, the equation 
$$
\mathcal{L} u=f\ \text{ in } I\ ,\quad u(\pm 1)= u'(\pm 1)=0\ ,
$$  
has a unique solution $u\in C^{4+\alpha}(I)\cap C^{2+\alpha}([-1,1])$, which we denote by $Kf$. Moreover, $K$ belongs to $\mathcal{L}(C^{1+\alpha}([-1,1]), C^{4+\alpha}([-1,1]))$. Let $C_D^{2+\alpha}([-1,1])$ be the closed subspace consisting of all functions $w\in C^{2+\alpha}([-1,1])$ such that $w(\pm 1)= w'(\pm 1)=0$. The previous property and the Arzel\`a-Ascoli theorem entail that $K$ is a compact endomorphism of $C_D^{2+\alpha}([-1,1])$. In addition, given a non-negative $f\in C_D^{2+\alpha}([-1,1])$ with $f\not\equiv 0$, Theorem~\ref{prbp1} and Corollary~\ref{c1} guarantee that $Kf>0$ in $I$ with $(Kf)''(\pm 1)>0$. Thanks to these two properties, a standard argument shows that $Kf$ belongs to the interior of the positive cone of $C_D^{2+\alpha}([-1,1])$. Consequently, $K$ is strongly positive, and we may apply the Kre\u\i n-Rutman theorem (see e.g. \cite[Theorem~3.2]{A76}) to conclude the statement.
\end{proof}

An analogous result holds in the a higher-dimensional radially symmetric case. For the sake of simplicity, we only state and prove it for the particular operator $B\Delta^2 -T\Delta $ with homogeneous Dirichlet boundary conditions.

\begin{theorem}\label{asterix}
Let $B>0$ and $T\ge 0$.  Then the eigenvalue problem 
\begin{equation}\label{obelix}
B\Delta^2\phi -T\Delta\phi =\mu \phi\ \text{ in } \mathbb{B}_1\ ,\quad \phi= \partial_n\phi=0\ \text{ on } \partial \mathbb{B}_1\ ,
\end{equation}
has an eigenvalue $\mu_1>0$ with a corresponding radially symmetric eigenfunction $\phi_1>0$ in $\mathbb{B}_1$, and $\mu_1$ is the only eigenvalue with a positive radially symmetric eigenfunction. Moreover, any radially symmetric eigenfunction corresponding to the eigenvalue $\mu_1$ is a scalar multiple of $\phi_1$.
\end{theorem}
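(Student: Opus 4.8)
The plan is to mimic the proof of Theorem~\ref{wildschwein}, replacing the one-dimensional sign-preserving result with its radial counterpart (Theorem~\ref{prrbp1}) and working in a function space adapted to radial symmetry. First I would reduce \eqref{obelix} to an ordinary differential equation: writing $\phi(x)=\Phi(|x|)$, the operator $B\Delta^2-T\Delta$ becomes a fourth-order ODE operator on $(0,1)$ with the usual radial Laplacian $\Delta_{\mathrm{rad}} = \partial_r^2 + \frac{d-1}{r}\partial_r$, and I would record that it fits the decomposition of Theorem~\ref{prrbp1} with $\mathcal{L}_1 w = \Delta w$ and $\mathcal{L}_2 w = B\Delta w - T w$, both satisfying the ellipticity and sign conditions \eqref{rbp1}.

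Next I would set up the solution operator. By standard elliptic theory for $B\Delta^2-T\Delta$ with clamped boundary conditions on the ball (or directly via the Boggio/Owen-type Green function), the equation $B\Delta^2 u - T\Delta u = f$, $u=\partial_n u=0$ on $\partial\mathbb{B}_1$, has a unique classical solution $Kf$ for $f$ in a suitable Hölder space; restricting to radially symmetric data, $K$ maps the space of radially symmetric functions into itself and is compact there by Arzelà--Ascoli. The injectivity of $B\Delta^2-T\Delta$ on radial functions with homogeneous Dirichlet data follows from Theorem~\ref{prrbp1} applied with $f\equiv 0$ (taking $\gamma=\Delta u$): then $\mathcal{L}_2\gamma = 0 \le 0$, so either $u\equiv 0$ or $u<0$; applying the same to $-u$ forces $u\equiv 0$. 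Then, given radially symmetric $f\ge 0$, $f\not\equiv 0$, Theorem~\ref{prrbp1} gives $Kf<0$ in $\mathbb{B}_1$ and Corollary~\ref{c22} gives $U''(1)<0$ where $Kf(x)=U(|x|)$; by a Hopf-type argument these two facts place $-Kf$ in the interior of the positive cone of the radial Hölder space $C^{2+\alpha}_{D,\mathrm{rad}}(\bar{\mathbb{B}}_1)$ (functions vanishing to first order on $\partial\mathbb{B}_1$). Hence $-K$ is a compact, strongly positive endomorphism, and the Kreĭn--Rutman theorem (e.g. \cite[Theorem~3.2]{A76}) yields a simple principal eigenvalue $\nu_1>0$ of $-K$ with positive eigenfunction $\phi_1$, uniqueness of the eigenvalue admitting a positive eigenfunction, and one-dimensionality of that eigenspace; setting $\mu_1 = 1/\nu_1>0$ translates this back to \eqref{obelix}.

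The main obstacle I expect is purely technical rather than conceptual: verifying that $-Kf$ genuinely lies in the \emph{interior} of the positive cone of the radial space. Near the boundary this is exactly what Corollary~\ref{c22} provides (strict negativity of $U''(1)$, i.e. $-U$ behaves like $c(1-r)^2$ with $c>0$), but one also needs uniform strict positivity away from the boundary, which follows from $Kf<0$ on the compact set $\bar{\mathbb{B}}_1$ together with the boundary behavior; packaging this into the statement ``$-Kf$ is an interior point'' requires a short but careful argument about the topology of the cone in $C^{2+\alpha}_{D,\mathrm{rad}}$, entirely analogous to the one invoked in the proof of Theorem~\ref{wildschwein}. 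A secondary point to be careful about is the regularity/domain bookkeeping at $r=0$: one must check that radially symmetric solutions of the ODE are genuinely smooth at the origin, which is standard for the polyharmonic operator but should be noted. With these checks in place the argument is a direct transcription of the proof of Theorem~\ref{wildschwein}.
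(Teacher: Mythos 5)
Your overall strategy coincides with the paper's: invert the clamped operator $B\Delta^2-T\Delta$, regard the solution operator $K$ as a compact endomorphism of the radial H\"older space $C^{2+\alpha}_{D,\mathrm{rad}}(\bar{\mathbb{B}}_1)$ (compactness by Arzel\`a-Ascoli), establish strong positivity from Theorem~\ref{prrbp1} and Corollary~\ref{c22}, and conclude with the Kre\u\i n-Rutman theorem. However, there is a sign slip which, taken literally, makes the Kre\u\i n-Rutman step fail: Theorem~\ref{prrbp1} states that a \emph{non-positive} right-hand side produces a \emph{negative} solution, so for radial $f\ge 0$ with $f\not\equiv 0$ one gets $Kf>0$ in $\mathbb{B}_1$ (and, applying Corollary~\ref{c22} to $K(-f)$, the radial profile of $Kf$ has strictly positive second derivative at $r=1$), not $Kf<0$ as you claim. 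Hence it is $K$ itself, and not $-K$, that is strongly positive; $-K$ maps the positive cone into its negative, so the Kre\u\i n-Rutman theorem cannot be applied to it, and with your sign convention the relation $-K\phi_1=\nu_1\phi_1$ would translate into $B\Delta^2\phi_1-T\Delta\phi_1=-\phi_1/\nu_1$, i.e.\ a \emph{negative} eigenvalue. Once the sign is corrected the argument is exactly the paper's: $K\phi_1=\nu_1\phi_1$ with $\nu_1=r(K)>0$ and $\phi_1$ in the interior of the cone, so $\mu_1=1/\nu_1>0$ in \eqref{obelix}, and simplicity as well as uniqueness of the eigenvalue admitting a positive radial eigenfunction follow from \cite[Theorem~3.2]{A76}.

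Two smaller points. To invoke \cite[Theorem~2.19]{GGS10} for the solvability of $B\Delta^2 u-T\Delta u=f$ one needs triviality of the kernel for \emph{arbitrary}, not merely radial, clamped functions; the paper obtains this by multiplying the homogeneous equation by $u$ and integrating by parts, which gives $B\|\Delta u\|_{L_2}^2+T\|\nabla u\|_{L_2}^2=0$, whereas your injectivity argument via Theorem~\ref{prrbp1} only covers radial $u$. Your fallback ``standard elliptic theory'' (coercivity of this quadratic form on $H^2_0(\mathbb{B}_1)$) does close this, but the aside about a ``Boggio/Owen-type Green function'' does not: Boggio's formula concerns $\Delta^2$ only and Owen's computation is one-dimensional, and no Green function positivity is available for $B\Delta^2-T\Delta$ on the ball in dimension $d\ge 2$ --- which is precisely why the theorem is restricted to radial functions. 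Finally, no ODE reduction (and hence no regularity bookkeeping at $r=0$) is needed: Theorem~\ref{prrbp1} is stated directly for the PDE setting with $\mathcal{L}_1=\Delta$ and $\mathcal{L}_2 w=B\Delta w-Tw$, whose coefficients satisfy \eqref{rbp1} since $c_2=-T\le 0$.
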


\begin{proof}
We first check that if $u$ solves \eqref{obelix} with $\mu=0$, then $u\equiv 0$ in $\mathbb{B}_1$. Indeed, multiplying the equation by $u$ and integrating by parts give $B\Delta u \equiv 0$ in $\mathbb{B}_1$ with $u=0$ on $\partial \mathbb{B}_1$, whence $u\equiv 0$ in $\mathbb{B}_1$. Consequently, as in the proof of Theorem~\ref{wildschwein} we may apply \cite[Theorem~2.19]{GGS10} to obtain that, for any fixed $\alpha\in (0,1)$ and any $f\in C^{1+\alpha}(\bar{\mathbb{B}}_1)$, the equation 
\begin{equation}\label{hinkelstein}
B\Delta^2 u -T\Delta u=f\ \text{ in } \mathbb{B}_1\ ,\quad u= \partial_n u=0\ \text{ on } \partial \mathbb{B}_1\ ,
\end{equation}
has a unique solution $Kf=u\in C^{4+\alpha}(\mathbb{B}_1)\cap C^{2+\alpha}(\bar{\mathbb{B}}_1)$ and $K\in \mathcal{L}(C^{1+\alpha}(\bar{\mathbb{B}}_1), C^{4+\alpha}(\bar{\mathbb{B}}_1))$. The results so far are valid for arbitrary functions $f$ defined in $\mathbb{B}_1$ with the required regularity. We now restrict to radially symmetric functions.
Let $C_{D,rad}^{2+\alpha}(\bar{\mathbb{B}}_1)$ be the closed subspace consisting of all radially symmetric functions $w\in C^{2+\alpha}(\bar{\mathbb{B}}_1)$ such that $w= \partial_n w=0$ on $\partial \mathbb{B}_1$. The well-posedness of \eqref{hinkelstein} and the rotational invariance of the operator $B\Delta^2  -T\Delta $ ensure that $Kf$ is radially symmetric if $f$ is.  The previous properties and the Arzel\`a-Ascoli theorem thus entail that $K$ is a compact endomorphism of $C_{D,rad}^{2+\alpha}(\bar{\mathbb{B}}_1)$, and arguing as in the proof of Theorem~\ref{wildschwein} we conclude from Theorem~\ref{prrbp1} and Corollary~\ref{c22} that $K$ is strongly positive. Consequently,  we may again apply the Kre\u\i n-Rutman theorem \cite[Theorem~3.2]{A76} to complete the proof.
\end{proof}

\begin{remark}\label{idefix}
Note that we do not claim that $\mu_1$ is the principal eigenvalue of the operator $B\Delta^2 -T\Delta$ subject to homogeneous Dirichlet boundary conditions since we restrict our attention to radially symmetric functions. When $T=0$, the simplicity of the principal eigenvalue and the positivity of the corresponding normalized eigenfunction of \eqref{obelix} are already known and may be found in \cite{EGG10,GGS10}. It is worth to point out that  one does not need to restrict to the radially symmetric setting in that case since the strong sign-preserving property is true for arbitrary functions for this particular operator due to Boggio's principle \cite{Bo05}.
\end{remark}

\subsection{Radially Symmetric Stationary Solutions for a MEMS Model}\label{Sub46}

In this subsection we prove Theorem~\ref{MEMS} and thus consider the boundary value problem
\begin{equation}\label{BT}
B\Delta^2 u-T\Delta u= -\lambda g(u)\ \text{ in } \mathbb{B}_1\ ,\quad u=\partial_n u=0\ \text{ on } \partial \mathbb{B}_1\ ,
\end{equation}
where $B>0$, $T>0$, and $g\in C^2(J)$ is a non-negative and non-increasing function with $g(0) >0$. To obtain the existence of a maximal solution we use a monotonicity argument as in \cite[Chapter~11]{EGG10}. Define
\begin{equation*}
\begin{split}
\lambda_*:=\sup\{\lambda> 0\, ;\, & \eqref{BT} \text{ has a radially symmetric classical}\\
& \text{solution $u\in C^4(\mathbb{B}_1)\cap C^2(\bar{\mathbb{B}}_1)$ with $u(\bar{\mathbb{B}}_1)\subset J$}\}
\end{split}
\end{equation*}
and note that $\lambda_*>0$. Indeed, setting  for  $q\in (1,\infty)$
$$
Au:= B\Delta^2 u-T\Delta u\ ,\quad u\in W_{q,D}^4(\mathbb{B}_1):=\{u\in W_q^4(\mathbb{B}_1)\,;\, u=\partial_n u=0 \text{ on } \partial \mathbb{B}_1\}\ ,
$$ 
the operator $A: W_{q,D}^4(\mathbb{B}_1)\rightarrow L_q(\mathbb{B}_1)$ is boundedly invertible \cite[Theorem~2.20]{GGS10}, and the implicit function theorem thus implies that all zeros of $F(\lambda,u):=u+\lambda A^{-1} g(u)$ near $(\lambda,u)=(0,0)$ lie on a curve $(\lambda, U_\lambda)$ with $0\le \lambda\le \delta$ for some $\delta>0$. Since \eqref{BT} is rotationally invariant and since $q$ was arbitrary, this uniqueness property entails that $U_\lambda$ is radially symmetric and belongs to $C^4(\mathbb{B}_1)\cap C^2(\bar{\mathbb{B}}_1)$ with $U_\lambda(\bar{\mathbb{B}}_1)\subset J$, whence $\lambda_*>0$. To continue, we need the following auxiliary result:

\begin{lemma}\label{LL3}
Let $\lambda>0$ and suppose  that there is a non-positive radially symmetric classical subsolution $\sigma \in C^4(\mathbb{B}_1)\cap C^2(\bar{\mathbb{B}}_1)$ of \eqref{BT}, i.e.
$$
B\Delta^2 \sigma-T\Delta \sigma\le  -\lambda g(\sigma)\ \text{ in } \mathbb{B}_1\ ,\quad \sigma=\partial_n \sigma=0\ \text{ on } \partial \mathbb{B}_1\ ,
$$
satisfying $\sigma(\bar{\mathbb{B}}_1)\subset J$. Let $w\in C^4(\mathbb{B}_1)\cap C^2(\bar{\mathbb{B}}_1)$ be a radially symmetric function such that $\sigma\le w\le 0$ in~$\mathbb{B}_1$.
Then the boundary value problem
\begin{eqnarray}
&B\Delta^2 v-T\Delta v=-\lambda g(w)\ \text{in } \mathbb{B}_1\ ,\label{oo1}\\
& v =\partial_n v =0\ \text{ on } \partial \mathbb{B}_1\ , \label{oo2}
\end{eqnarray}
has a unique radially symmetric classical solution $v\in C^4(\mathbb{B}_1)\cap C^2(\bar{\mathbb{B}}_1)$ satisfying $\sigma\le v<0$ in $\mathbb{B}_1$.
\end{lemma}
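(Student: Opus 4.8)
The plan is to solve \eqref{oo1}--\eqref{oo2} by the invertibility of $A=B\Delta^2-T\Delta$, then to locate the solution $v$ between $\sigma$ and $0$ by two applications of the sign-preserving property (Theorem~\ref{prrbp1}), exploiting the monotonicity of $g$. First I would note that, since $\sigma\le w\le 0$ and $\sigma(\bar{\mathbb{B}}_1)\subset J$ with $0\in J$ and $J$ an interval, one has $w(\bar{\mathbb{B}}_1)\subset J$, so the right-hand side $-\lambda g(w)\in C^2(\bar{\mathbb{B}}_1)$ is well-defined; as $g\ge 0$ and $g(0)>0$ with $g$ non-increasing, we have $-\lambda g(w)\le -\lambda g(0)<0$ in $\mathbb{B}_1$, hence it is non-positive and does not vanish. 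Existence and uniqueness of a solution $v\in C^4(\mathbb{B}_1)\cap C^2(\bar{\mathbb{B}}_1)$ then follow from \cite[Theorem~2.19]{GGS10} (or the boundedly invertible operator $A$ on $W^4_{q,D}$ from \cite[Theorem~2.20]{GGS10} together with elliptic regularity), and radial symmetry of $v$ is inherited from that of $w$ by the rotational invariance of $A$ and uniqueness.

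Next I would prove $v<0$ in $\mathbb{B}_1$. Here I apply Theorem~\ref{prrbp1} with $\mathcal{L}_1 w:=\Delta w$ and $\mathcal{L}_2 w:=B\Delta w-Tw$, setting $\gamma:=\Delta v$. Both operators are uniformly elliptic with non-positive zeroth-order coefficient (the coefficient of $w$ in $\mathcal{L}_1$ is $0$, and in $\mathcal{L}_2$ it is $-T\le 0$), $\mathcal{L}_1 v=\gamma$ holds by definition, $\mathcal{L}_2\gamma=B\Delta^2 v-T\Delta v=-\lambda g(w)\le -\lambda g(0)<0$, and the boundary conditions $v=\partial_n v=0$ on $\partial\mathbb{B}_1$ are exactly \eqref{rbp7}; both $v$ and $\gamma=\Delta v$ are radially symmetric. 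Theorem~\ref{prrbp1} gives either $v\equiv0$ or $v<0$, and $v\equiv0$ is impossible since it would force $-\lambda g(w)\equiv0$, contradicting $g(0)>0$. Hence $v<0$ in $\mathbb{B}_1$.

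It remains to show $\sigma\le v$, which I regard as the main point. Set $z:=v-\sigma$. Subtracting the (sub)solution inequality for $\sigma$ from the equation for $v$ yields $B\Delta^2 z-T\Delta z\ge -\lambda\big(g(w)-g(\sigma)\big)$ in $\mathbb{B}_1$, together with $z=\partial_n z=0$ on $\partial\mathbb{B}_1$. Since $\sigma\le w$ and $g$ is non-increasing, $g(w)\le g(\sigma)$, so the right-hand side $-\lambda(g(w)-g(\sigma))\ge 0$; thus $B\Delta^2 z-T\Delta z\ge 0$. Applying Theorem~\ref{prrbp1} to $-z$ (with the same operators $\mathcal{L}_1=\Delta$, $\mathcal{L}_2=B\Delta-T$ and $\gamma:=-\Delta z$, so that $\mathcal{L}_2\gamma=-(B\Delta^2 z-T\Delta z)\le 0$, all data radial, homogeneous Dirichlet conditions on $-z$) yields either $-z\equiv0$, i.e. $v=\sigma$, or $-z<0$, i.e. $v>\sigma$; in either case $\sigma\le v$. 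Combining with the strict inequality $v<0$ from the previous paragraph gives $\sigma\le v<0$ in $\mathbb{B}_1$, which completes the proof. The one subtlety to handle carefully is that Theorem~\ref{prrbp1} requires a genuine (possibly vanishing) non-positive right-hand side for $\mathcal{L}_2\gamma$, which is satisfied here even when $g(w)\equiv g(\sigma)$, in which case the conclusion $v\equiv\sigma$ is consistent; so no additional argument is needed.
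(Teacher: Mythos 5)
Your proposal is correct and follows essentially the same route as the paper: establish $g(w)\in C^2(\bar{\mathbb{B}}_1)$ from $\sigma\le w\le 0$ and $\sigma(\bar{\mathbb{B}}_1)\subset J$, get existence, uniqueness and radial symmetry of $v$ from the invertibility of $B\Delta^2-T\Delta$ and rotational invariance, and then apply Theorem~\ref{prrbp1} twice (once to $\sigma-v$ using $g(w)\le g(\sigma)$ to get $\sigma\le v$, once to $v$ itself using $g(w)\ge g(0)>0$ to get $v<0$). Your explicit choice $\mathcal{L}_1=\Delta$, $\mathcal{L}_2=B\Delta-T$, $\gamma=\Delta v$ just spells out the decomposition the paper uses implicitly.
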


\begin{proof}
Due to $\sigma(\bar{\mathbb{B}}_1)\subset J$ and $\sigma\le w\le 0$, the properties of $g$ and $w$ imply that $g(w)\in C^2(\bar{\mathbb{B}}_1)$. Therefore, \eqref{oo1}, \eqref{oo2} has a unique classical solution $v\in C^4(\mathbb{B}_1)\cap C^2(\bar{\mathbb{B}}_1)$ which is radially symmetric since $w$ is. Next, since $g(0)\le g(w)\le g(\sigma)$ in $\mathbb{B}_1$, which stems from the monotonicity of $g$, we also have 
$$
B\Delta^2 (\sigma-v)-T\Delta (\sigma-v)\le  -\lambda g(\sigma)+\lambda g(w)\le 0\ \text{ in } \mathbb{B}_1\ ,
$$
and Theorem~\ref{prrbp1} guarantees that $\sigma\le v$ in $\mathbb{B}_1$. Moreover, since  $g(0)>0$, a further  application of  Theorem~\ref{prrbp1} reveals that $v<0$ in $\mathbb{B}_1$, which completes the proof.
\end{proof}

Now, fix $\lambda\in (0,\lambda_*)$ and suppose there is a radially symmetric classical subsolution $\sigma \in C^4(\mathbb{B}_1)\cap C^2(\bar{\mathbb{B}}_1)$ of \eqref{BT} satisfying $\sigma(\bar{\mathbb{B}}_1)\subset J$. Since $g$ is non-negative, Theorem~\ref{prrbp1} ensures that $\sigma$ is non-positive in $\mathbb{B}_1$.
Define then a sequence $(u_n)_{n\in\mathbb{N}}$ by $u_0:=0$ and
\begin{eqnarray}
&B\Delta^2 u_n-T\Delta u_n=-\lambda g(u_{n-1})\ \text{in } \mathbb{B}_1\ ,\label{o1}\\
& u_n =\partial_n u_n =0\ \text{ on } \partial \mathbb{B}_1\ . \label{o2}
\end{eqnarray}
Since $\sigma\le u_0=0$, we may apply Lemma~\ref{LL3} repeatedly to obtain by induction that $(u_n)_{n\ge 1}$ is a well-defined sequence of radially symmetric classical solutions in $C^4(\mathbb{B}_1)\cap C^2(\bar{\mathbb{B}}_1)$ to \eqref{o1}, \eqref{o2} satisfying  $\sigma\le u_n<0$ in $\mathbb{B}_1$ for each $n\ge 1$. Let us now show by induction that $ u_n\le u_{n-1}$ in $\mathbb{B}_1$ for each $n\ge 1$. We already know that $u_1\le u_0=0$ in $\mathbb{B}_1$. Assume that $u_n\le u_{n-1}$ in $\mathbb{B}_1$ for some $n\ge 1$. Then $g(u_{n-1})\le g(u_n)$ in $\mathbb{B}_1$ so that
$$
B\Delta^2 (u_{n+1}-u_n)-T\Delta (u_{n+1}-u_n)\le  -\lambda g(u_n)+\lambda g(u_{n-1})\le 0\quad\text{in } \mathbb{B}_1\ ,
$$
whence $u_{n+1}\le u_n$ by Theorem~\ref{prrbp1}. We have thus shown that
\begin{equation}\label{p4}
\sigma\le u_{n+1}\le u_n\le u_1< 0\ \text{ in } \mathbb{B}_1\ ,\quad n\ge 1\ ,
\end{equation}
and, in particular, $0\le g(u_n)\le g(\sigma)\in L_\infty(\mathbb{B}_1)$. From \eqref{o1}, \eqref{o2} we infer that the sequence $(u_n)_{n\in\mathbb{N}}$ is bounded in $W_q^4(\mathbb{B}_1)$ for any $q\in [1,\infty)$. It is then straightforward to show that $(u_n)_{n\in\mathbb{N}}$ converges to a radially symmetric classical solution $u_\lambda\in C^4(\mathbb{B}_1)\cap C^2(\bar{\mathbb{B}}_1)$ of \eqref{BT} satisfying 
\begin{equation}\label{pp}
\sigma\le u_\lambda < 0\ \text{ in } \mathbb{B}_1
\end{equation} 
by \eqref{p4}.
Note that $u_\lambda$ does not depend on $\sigma$. Therefore, it lies above any radially symmetric classical subsolution and is in this sense a maximal solution. In particular, it is unique. To complete the existence of $u_\lambda$, it remains to construct a suitable subsolution. But, since $\lambda\in (0,\lambda_*)$, there is $\lambda_0\in (\lambda,\lambda_*)$ and a radially symmetric classical solution $u^0$ of \eqref{BT} with $\lambda$ replaced by $\lambda_0$ satisfying $u^0(\bar{\mathbb{B}}_1)\subset J$. Also, $u^0\le 0$ by Theorem~\ref{prrbp1}. Furthermore,
$$
B\Delta^2 u^0-T\Delta u^0+\lambda g(u^0)\le  (\lambda-\lambda_0) g(u^0)\le 0\quad\text{in } \mathbb{B}_1\ ,
$$
so that $u^0$ is a radially symmetric classical subsolution of \eqref{BT}. The previous analysis now ensures the existence of the maximal solution $u_\lambda$.  

Moreover, given $0<\lambda_1<\lambda_2<\lambda_*$, we deduce from \eqref{BT} and \eqref{pp} that
$$
B\Delta^2 u_{\lambda_2}-T\Delta u_{\lambda_2}+\lambda_1 g(u_{\lambda_2})=  (\lambda_1-\lambda_2) g(u_{\lambda_2})< 0\quad\text{in } \mathbb{B}_1\ ,
$$
and the maximality of $u_{\lambda_1}$ warrants that $u_{\lambda_2}\le u_{\lambda_1}$ in $\mathbb{B}_1$. We next notice that \eqref{BT}, \eqref{pp}, and the monotonicity of $g$ ensure
$$
B\Delta^2 (u_{\lambda_2}-u_{\lambda_1}) -T\Delta (u_{\lambda_2}-u_{\lambda_1})=  (\lambda_1-\lambda_2) g(u_{\lambda_1})+\lambda_2 (g(u_{\lambda_1})-g(u_{\lambda_2}))< 0\quad\text{in } \mathbb{B}_1\ .
$$
Theorem~\ref{prrbp1} then gives $u_{\lambda_2}<u_{\lambda_1}$ in $\mathbb{B}_1$.

Finally, assume that $a:=\inf J>-\infty$ and $m:=\inf_{(a,0)} g >0$. Let $\phi_1>0$ be a radially symmetric positive eigenfunction associated with the eigenvalue $\mu_1>0$ of the operator $B\Delta^2-T\Delta$ subject to homogeneous Dirichlet boundary conditions, its existence being guaranteed by Theorem~\ref{asterix}. Now, if $u$ is any radially symmetric classical solution to \eqref{BT} for some $\lambda>0$, then $a<u\le 0$ by Theorem~\ref{prrbp1} and thus
$$
a\mu_1\int_{\mathbb{B}_1} \phi_1\,\mathrm{d} x \le \mu_1\int_{\mathbb{B}_1}\phi_1 u\,\mathrm{d} x = \int_{\mathbb{B}_1}\phi_1 (B\Delta^2 u-T\Delta u)\,\mathrm{d} x= -\lambda \int_{\mathbb{B}_1}\phi_1 g(u)\,\mathrm{d} x \le -\lambda\, m \int_{\mathbb{B}_1}\phi_1 \,\mathrm{d} x\ .
$$
Therefore, $\lambda_*\le -a \mu_1/m <\infty$.
This completes the proof of Theorem~\ref{prrbp1}.

\section*{Acknowledgments}

Part of this research was done while Ph.L. was enjoying the kind hospitality of the Institut f\"ur Angewandte Mathematik of the Leibniz Universit\"at Hannover. The work of Ph.L. was partially supported by the Centre International de Math\'ematiques et d'Informatique CIMI, Toulouse.


\bibliographystyle{amsplain}
\bibliography{BP}

\end{document}